\newcommand{\bR}{\mathbb{R}}
\newcommand{\bRn}{\mathbb{R}^n}
\newcommand{\bN}{\mathbb{N}}
\newcommand{\s}{\mathbf{s}}
\newcommand{\dom}{\mathrm{dom}}
\newcommand{\epi}{\mathrm{epi}}
\newcommand{\esp}[1]{\mathbb{E}\left[ #1\right]}
\newcommand{\espc}[2]{\mathbb{E}^{#1}\left[ #2\right]}
\def\X{{\mathbf X}}
\theoremstyle{plain}
\newtheorem{proclaim}{PROCLAIM}[section]
\newtheorem{lemma}[proclaim]{Lemma}
\newtheorem{prop}[proclaim]{Proposition}
\newtheorem{teo}[proclaim]{Theorem}
\theoremstyle{definition}
\newtheorem{assumption}[proclaim]{Assumption} 
\newtheorem{rem}[proclaim]{Remark}
\newtheorem{dfn}[proclaim]{Definition}
\begin{document}
	
	\begin{titlepage}
		\vglue 0.5cm
		\begin{center}
			\begin{large}
				{\bf On the Value Function of Convex Bolza Problems Governed by Stochastic Difference Equations}
				\smallskip
			\end{large}
			\vglue 1.truecm
			\begin{tabular}{c}
				\begin{large} {\sl Sebastian \'Alvarez$^1$, Julio Deride$^2$, Cristopher Hermosilla$^1$ } \end{large} \\
				$^1$Departamento de Matem\'atica, Universidad T\'ecnica Federico Santa Mar\'ia \\
				$^2$Facultad de Ingenier\'ia y Ciencias, Universidad Adolfo Ibáñez \\
				sebastian.alvarezo@usm.cl, julio.deride@uai.cl, cristopher.hermosill@usm.cl
			\end{tabular}
		\end{center}
		\vskip 0.25truecm
		\noindent
		{\bf Abstract}.\quad  In this paper we study the value function of Bolza problems governed by stochastic difference equations, with particular emphasis on the convex non-anticipative case. Our goal is to provide some insights on the structure of the subdiferential of the value function. In particular, we establish a connection between the evolution of the subgradients of the value function and a stochastic difference equation of Hamiltonian type. This result can be seen as a transposition of the method of characteristics, introduced by Rockafellar and Wolenski in the 2000s, to the stochastic discrete-time setting. Similarly as done in the literature for the deterministic case, the analysis is based on a duality approach. For this reason we study first a dual representation for the value function in terms of the value function of a dual problem, which is a pseudo Bolza problem. The main difference with the deterministic case is that (due to the non-anticipativity) the symmetry between the Bolza problem and its dual is no longer valid.  This in turn implies that ensuring the existence of minimizers for the Bolza problem (which is a key point for establishing the method of characteristics) is not as simple as in the deterministic case, and it should be addressed differently. To complete the exposition, we study the existence of minimizers for a particular class of Bolza problems governed by linear stochastic difference equations, the so-called linear-convex optimal control problems.
		.
		\vskip 0.25truecm
		\halign{&\vtop{\parindent=0pt
				\hangindent2.5em\strut#\strut}\cr
			{\bf Keywords}: Convex Bolza problems, Stochastic Difference Equations, Hamiltonian Systems, Discrete-time systems, Existence of minimizers\hfill\break\cr
			{\bf MSC Classification}: 39A50, 93E20, 49J55, 49K45, 49N05 \cr\cr
			{\bf Date}: November 20, 2025 \cr}
	\end{titlepage}

	\section{Introduction}
	This paper is concerned with Bolza problems governed by \emph{stochastic difference equations}, where the underlying probability space is given by $(\Omega,\mathscr{A},\mu)$.  Specifically, for fixed integers $\tau <T$, we study the optimization problem
	\begin{align}
		\label{eq:StochBolza}\tag{P}
		\text{Minimize }\esp{~\sum_{t=\tau+1}^TL_t(\omega,x_{t-1},\Delta x_t )}+g\left(\esp{x_T}\right),\end{align}
	subject to the constraint $\esp{x_\tau} = \xi$, where $\xi \in \bRn$ is the (expected) initial state, treated here as an input of the system; the parameter $\tau$ is interpreted as the initial time, and $T$ as the final horizon. The random variable $x_t$ represents the configuration of the system at time $t$, and $\Delta x_t := x_t - x_{t-1}$ denotes its increment between two consecutive time instants. Here and in what follows, we will omit the dependence on the events $\omega\in\Omega$ of the random variables, notably $x_t$ and $\Delta x_t$ in~\eqref{eq:StochBolza}. The decision vectors $x_{\tau+1},\ldots,x_T$ belong to appropriate function spaces that ensure the problem is well-defined and that no measurability nor integrability issues arise.

	The focus of this work is on \emph{non-anticipative} stochastic Bolza problems of convex type, where the stage costs $L_{\tau+1},\ldots,L_T$ are  convex normal integrands (c.f.~\cite[16.D]{RockafellarWets1998}), and the terminal cost $g$ is a convex function; not necessarily finite everywhere. The \emph{non-anticipativity} approach we take means that, in the decision-making process, the decision variables $x_t:\Omega\to\bRn$ depends only on information available at time $t$, which is represented by a $\sigma$-field $\mathscr{G}_{t}$, and not on future random events, that is, \[\mathscr{G}_{\tau} \subset\mathscr{G}_{\tau+1}\subset\ldots \subset\mathscr{G}_{T}\subset \mathscr{A}.\]

	The central object we study is the so-called \emph{value function} of the problem, which is defined as the optimal value of the problem when considering the initial state $\xi$ (at time $t=\tau$) as an input variable. The key feature we exploit in this paper is the following. If, for every event $\omega\in\Omega$, the mappings $L_{\tau+1}(\omega,\cdot,\cdot),\ldots,L_{T}(\omega,\cdot,\cdot)$ and the terminal cost $g$ are convex, then the value function is itself a convex function on the initial state. Notably, in this setting, the Lagrangians are jointly convex in the state and velocity variables at the same time. This property provides the foundation for a duality-based analysis of convex Bolza problems. In particular, one of our intermediate results is to derive a dual representation for the value function in terms of the Fenchel conjugate, similarly as done in \cite{RocWol00,HerWol19} for continuous-time deterministic systems, in \cite{BarSir2019} for continuous-time stochastic systems or in \cite{deride2024subgradientevolution} for discrete-time deterministic systems. It is important to highlight that, in the deterministic settings, the primal and dual problems are completely symmetric Bolza problems (they have the same structure), however this is no longer the case for the non-anticipative stochastic framework.
	
	Our main goal is to investigate how the subgradients of value function evolve over time and how this evolution can be interpreted in terms of stochastic difference equations of Hamiltonian type, in a manner analogous to what has been done for continuous-time systems in \cite{RocWol00,HerWol19} or for discrete-time systems in \cite{deride2024subgradientevolution}. In \cite[Theorem~2.4]{RocWol00}, Rockafellar and Wolenski introduced a global method of characteristics for convex Bolza problems governed by continuous-time systems, which describes the subgradient dynamics of the value function via trajectories of an associated Hamiltonian system (an ODEs system). A key step required for establishing the method of characteristics is being able to ensure the existence of primal and dual optimal solutions. This usually is accomplished by assuming a \emph{Slater-type} qualification condition. Enforcing this \emph{interior feasibility qualification condition} on the primal, implies that the dual problem has a solution, and vice versa, since the primal and dual problem are symmetric. Something analogous happens in the deterministic  discrete-time case.
	
	For deterministic discrete-time optimal control (a particular type of Bolza problem), a comparable method of characteristics appears in \cite[Proposition~5.13]{de2008sustainable}, but with rather strong assumptions: the value function and the problem data are smooth, the minimizer is unique, and no state nor control constraints are allowed. A more general case was studied in \cite{deride2024subgradientevolution}, where the authors present a method of characteristic, which requires rather mild assumptions. Indeed, as mentioned earlier and thanks to the symmetry between the primal and dual problems, symmetric strict interior feasibility conditions on the primal and dual problems are enough for the analysis. Here we perform a similar study as in \cite{deride2024subgradientevolution}, but for problems governed by stochastic difference equations. In this setting, the non-anticipativity breaks down the symmetry between primal and dual problems, and so the Slater-type condition only ensures the existence of a dual optimal solution. Enforcing a interior feasibility qualification condition on the dual problem is possible, and it could eventually lead to the existence of a primal minimizer. However, because of the complex structure of the dual problem, the verification of such qualification condition is more involved. For this reason, in our approach, we establish the method of characteristics by assuming first the existence of primal solutions, and later we study a particular case (the so-called linear-convex optimal control problem), where the existence of minimizers can be guaranteed under rather mild assumption on the data of the primal problem, and without need of actually computing the dual problem.

	To the best of our knowledge, no prior work has developed a duality-based approach to study either the value function of stochastic discrete-time convex Bolza problems or the evolution of its subgradients. Recent work on the dynamic programming principle for general convex stochastic optimization problems has focused on characterizing the value functions recursively via the conditional expectation of normal integrands, extending the initial abstract formulations by Rockafellar and Wets; see \cite{PenPer23,PenPer25}. For a general discussion of value functions in stochastic discrete-time optimal control, and their relation to the dynamic programming equation, we refer to \cite[Ch.~8]{de2008sustainable}. 
	
	Note that convex Bolza problems with extended-valued Lagrangians allow us to recover well-known control problems such as the linear quadratic regulator or the linear-convex problems with mixed constraints. This has been discussed in \cite[Ex.2]{Roc70} and \cite{Roc87,GoeSub07,HerWol17,BecHer22} for continuous-time systems; the arguments can be readily adapted to a discrete-time setting.  We also mention that convex Bolza problems have been intensively studied in the context of continuous-time systems.  Indeed, in the 1970s, in a series of papers \cite{Roc70,Roc71,Roc72,Roc73,Roc76}, Rockafellar laid the foundations of a duality theory for this type of problems; \cite{RocWet83} is a transposition of the theory to discrete-time problems, and \cite{Bis73} is one to stochastic time-continuous problems.  The value function is studied via the duality theory in \cite{Goe04b,Goe04a,Goe05,Goe05a,HerWol17,HerWol19,Roc04,RocGoe08,RocWol00,RocWol00b}, in the absence of pathwise constraints.
	
	The main contributions of this paper are the following. On the one hand, we transpose the method of characteristics to discrete-time stochastic systems, by showing that the the subgradients of the value function can be obtained by mean of backward trajectories of a stochastic discrete-time Hamiltonian system (a stochastic difference equation) coupled with an appropriate transversality condition. For this reason, we establish first a dual representation for the value function in terms of the Fenchel conjugate of the value function of a suitable dual problem. Although the methodology we use in this paper is similar to the one used in \cite{deride2024subgradientevolution},  the technical details differ in many aspects. As pointed out before, the main difference with the deterministic case presented in \cite{deride2024subgradientevolution}, is that the symmetry between the primal and dual problems is no longer valid. To complete the analysis, we study the existence of minimizers for a particular class of Bolza problems governed by linear stochastic difference equations, the so-called linear-convex optimal control problems.

	\subsection{Notation and essentials}
	Throughout this article we use the following notation: $|\cdot|$ denotes the Euclidean norm and $a \cdot b$ stands for the Euclidean inner product of two vectors $a,b\in\bRn$. We set $[\![p:q]\!]:=\{p,p+1,\ldots,q\}$, that is, it is the collection of all integers between $p$ and $q$ (inclusive), assuming always that $p\leq q$; if $p=q$, we set $[\![p:q]\!]=\{p\}$. The (convex) normal cone to $S$ at $x\in S\subset \bRn$ is the set
	\[\mathcal{N}_S(x):=\{z\in\bRn\mid\ z\cdot(s-x)\leq 0,\ \forall s\in S\}.\]
	
	For probability space $(\Omega,\mathscr{A},\mu)$, we write $L^p(\Omega,\mathscr{A},\mu: \bRn)$  for the (equivalence class of) $\mathscr{A}$-measurable $\bRn$-valued functions defined on $\Omega$ that are $p$-integrable if $p<+\infty$ or essentially bounded if $p=+\infty$. We say that an $\mathscr{A}$-measurable real-valued function defined on $\Omega$ is \emph{summable} when it is integrable with finite integral.

	Let $X$ be a topological vector space, suppose $\varphi: X\to\bR\cup\{+\infty\}$ is a function.  The effective domain of $\varphi$ is the set 
	\[\dom(\varphi):=\{x\in X\mid\ \varphi(x)<+\infty\}.\]  
	The function $\varphi$ is said to be {\it proper} if $\dom(\varphi)\neq\emptyset$ and $\varphi(x)>-\infty$ for all $x\in X$;  {\it convex} if $\epi(\varphi):=\{(x,r)\in X\times\bR\mid\ \varphi(x)\leq r\}$ is a convex set, and  {\it lower semicontinuous (l.s.c.~for short)} if $\epi(\varphi)$ is a closed set in $X\times\bR$ in the usual product topology.
	
	If  $Y$ is another topological vector space and $\langle\cdot,\cdot\rangle$ is the duality pairing between $Y$ and $X$, the conjugate of $\varphi:X\to\bR\cup\{+\infty\}$ is the mapping $\varphi^*:Y\to\bR\cup\{\pm\infty\}$ defined via the formula
	\[\varphi^*(y):=\sup\left\{\langle y,x\rangle-\varphi(x)\mid\ x\in X\right\},\qquad\forall y\in Y,\]
	and its subdifferential  at $x\in\dom(\varphi)$ is the set
	\[\partial \varphi(x):=\{y\in Y\mid \varphi(x)+\langle y,z-x\rangle \leq \varphi(z),\ \forall z\in X\}.\]
	
	These mathematical objects are related via the Fenchel-Young equality: 
	\begin{equation}\label{eq:FYeq}
		y\in\partial \varphi(x)\quad\Longleftrightarrow\quad \varphi(x)+\varphi^*(y)=\langle x, y\rangle.
	\end{equation}
	For the case $X=L^{\infty}(\Omega,\mathscr{A},\mu: \bRn)^m$ and $Y=L^1(\Omega,\mathscr{A},\mu: \bRn)^m$ for some $m\in\bN$, we consider duality pairing given by:
	\[\langle y, x \rangle = \esp{\sum_{t=1}^{m} y_{t} \cdot x_{t}}.\]
	A function $h:\bRn\times\bRn\to\bR\cup\{+\infty\}$ is called \emph{concave-convex} if 
	$h_y(\cdot)=-h(\cdot,y)$ and $h_x(\cdot)=h(x,\cdot)$ are convex.
	The (concave-convex) subdifferential of $h$ is the set
	\begin{equation}\label{eq:subdif_saddle}
		\partial h(x,y):=[-\partial h_y(x)]\times\partial h_x(y),\qquad\forall (x,y)\in\bRn\times\bRn.	
	\end{equation}

	\section{Stochastic Convex Bolza Problems}\label{sec:duality}
	The goal of this paper is to study non-anticipative stochastic Bolza problems as \eqref{eq:StochBolza}, associated with stage costs $L_{\tau+1},\ldots, L_{T}: \Omega\times\bRn\times \bRn\rightarrow \mathbb{R} \cup \{+\infty\}$ (Lagrangians) and a given terminal cost $g:\bRn\rightarrow \mathbb{R}\cup\{+\infty\}$. The non-anticipativity is represented by means of a family of $\sigma$-field   $\mathscr{G}=\{\mathscr{G}_{t}\}_{t=\tau}^{T}$, associated with the underlying probability space $(\Omega,\mathscr{A},\mu)$, such that $\mathscr{G}_{t}\subset \mathscr{G}_{t+1}\subset\mathscr{A}$ for each $t \in [\![\tau:T-1]\!]$. 
	
	The main object we study is the so-called \emph{value function} issued from problem \eqref{eq:StochBolza}, which corresponds to the mapping that assigns to any initial state $\xi \in \bRn$, and $\s \in [\![\tau: T-1]\!]$, the optimal value of the problem. In mathematical terms it is given by \begin{equation}\label{primalvaluefunction} \mathbf{V}_{\s}(\xi):=\inf_{\left\{x_t\right\}_{t=\s}^T \in \mathscr{N}_{\s} } \left\{ \esp{~\sum_{t=\s+1}^T L_t\left(\omega, x_{t-1}, \Delta x_t\right)}+g\left(\esp{x_T}\right)\mid \esp{x_{\s}}=\xi\right\}.    
	\end{equation} 
	Recall that $\Delta x_t := x_t - x_{t-1}$. Here,  $\mathscr{N}_{\s}$ stands for set of feasible processes adapted to the filtration $\mathscr{G}$, that is, the (convex) subset of $\mathscr{L}^{\infty}_\s:=L^{\infty}(\Omega,\mathscr{A},\mu;\bRn)^{T-s+1}$ given by
	\[\mathscr{N}_{\s}=\left\{\left\{x_t\right\}_{t=\s}^T\in \mathscr{L}^{\infty}_{\s}\mid x_{t}\text{ is } \mathscr{G}_{t}-\text{measurable for any }t \in [\![\s:T]\!]\right\}.\]
	Our main concern is the convex case. Therefore, and unless otherwise explicitly stated, we assume the following basic conditions:
	\begin{equation}\tag{A}\label{hyp:convex}
		\begin{cases}
			L_{t}\text{ is a  normal convex integrand for each $t\in [\![\tau: T-1]\!]$;}\\
			g\text{ is a proper, convex and l.s.c.~function.}
		\end{cases}
	\end{equation}
	Recall that a function $\varphi:\Omega\times X\to\bR\cup\{+\infty\}$ is said to be a \emph{normal convex integrand} if $\omega \mapsto \operatorname{epi}(\varphi(\omega,\cdot))$ is an $\mathscr{A}$-measurable\footnote{A set-valued map $F:\Omega\rightrightarrows \bR\times\bRn$ is said to be $\mathscr{A}$-measurable if for any open set $O\subset\bR\times\bRn$ we have that $F^{-1}(O)\in \mathscr{A}$.} set-valued map with nonempty, convex and closed values; in \eqref{hyp:convex} we use $X=\bRn\times\bRn$.
	In particular, it follows that for each $t\in [\![\tau: T-1]\!]$ and  each event $\omega\in\Omega$ fixed, the mapping $L_{t}(\omega,\cdot,\cdot)$ is a  proper, convex and l.s.c.~function. A rather basic, but important, consequence of \eqref{hyp:convex} is that \[\omega\mapsto \sum_{t=\s+1}^T L_t\left(\omega, x_{t-1}, \Delta x_t\right)\] is an  $\mathscr{A}$-measurable function for any $\left\{x_t\right\}_{t=\s}^T \in \mathscr{L}^{\infty}_\s$; c.f.~\cite[Prop.~14.28]{RockafellarWets1998}.
	
	Moreover, to ensure that the expected value in the cost to be minimized in \eqref{eq:StochBolza} is well defined (or takes the value $+\infty$), we will impose the following integrability condition.
	\begin{equation}\tag{B}\label{hyp:integrability}
		\begin{cases}
			\forall\rho>0,~\exists\gamma:\Omega\rightarrow\bR,\text{ a summable function such that}\\
			L_{t}(\omega,x,v)\geq \gamma \text{ a.s. whenever }|x|,|v|\leq \rho,~ t\in[\![\tau+1:T]\!.]
		\end{cases}
	\end{equation}
	We will also assume that \eqref{hyp:integrability} is in force all along this paper. Therefore, for any $\left\{x_t\right\}_{t=\s}^T \in \mathscr{L}^{\infty}_\s$, there is a summable function $\tilde\gamma:\Omega\rightarrow \bR$ such that
	\[\esp{~\sum_{t=\s+1}^T L_t\left(\omega, x_{t-1}, \Delta x_t\right)}\geq (T-\s)\esp{\tilde\gamma}>-\infty.\]
	
	A (non surprising) consequence of the structural assumption \eqref{hyp:convex} is the convexity of value function.
	\begin{prop}
		The value function $\mathbf{V}_{\s}$ is convex for any $\mathrm{s}\in [\![\tau: T-1]\!]$.
	\end{prop}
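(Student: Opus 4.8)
The plan is to verify the convexity inequality $\mathbf{V}_{\s}\bigl(\lambda\xi_0+(1-\lambda)\xi_1\bigr)\le\lambda\mathbf{V}_{\s}(\xi_0)+(1-\lambda)\mathbf{V}_{\s}(\xi_1)$ directly from the definition \eqref{primalvaluefunction}, using two structural facts: the objective functional $\{x_t\}\mapsto\esp{\sum_{t=\s+1}^TL_t(\omega,x_{t-1},\Delta x_t)}+g(\esp{x_T})$ is convex on $\mathscr{L}^\infty_\s$, and the feasible set is the intersection of the convex set $\mathscr{N}_\s$ with the affine slice $\{\esp{x_\s}=\xi\}$, which depends affinely on $\xi$. (Equivalently, $\mathbf{V}_\s$ is the infimal projection in the variable $\{x_t\}_{t=\s}^T$ of a function on $\bRn\times\mathscr{L}^\infty_\s$ that is jointly convex under \eqref{hyp:convex}, and infimal projections of convex functions are convex; the $\varepsilon$-argument below is simply a self-contained version of this.) Fix $\s\in[\![\tau:T-1]\!]$, vectors $\xi_0,\xi_1\in\bRn$ and $\lambda\in(0,1)$, and set $\xi_\lambda:=\lambda\xi_0+(1-\lambda)\xi_1$. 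If $\mathbf{V}_\s(\xi_0)=+\infty$ or $\mathbf{V}_\s(\xi_1)=+\infty$ the inequality is trivial, and the case in which one of the two values equals $-\infty$ follows from an analogous limiting argument; so we may assume both are finite and, given $\varepsilon>0$, pick $\{x_t^i\}_{t=\s}^T\in\mathscr{N}_\s$ with $\esp{x_\s^i}=\xi_i$ and objective value at most $\mathbf{V}_\s(\xi_i)+\varepsilon$, for $i=0,1$.

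The next step is to check that $\{x_t^\lambda\}_{t=\s}^T:=\lambda\{x_t^0\}_{t=\s}^T+(1-\lambda)\{x_t^1\}_{t=\s}^T$ is feasible for $\xi_\lambda$. Since $\mathscr{L}^\infty_\s$ is a vector space and $\mathscr{N}_\s$ is convex (convex combinations preserve $\mathscr{G}$-adaptedness), we have $\{x_t^\lambda\}_{t=\s}^T\in\mathscr{N}_\s$; and by linearity of the expectation $\esp{x_\s^\lambda}=\lambda\xi_0+(1-\lambda)\xi_1=\xi_\lambda$.

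Finally I would bound the objective. The map $\{x_t\}\mapsto(x_{t-1},\Delta x_t)$ is linear, so the joint convexity of $L_t(\omega,\cdot,\cdot)$ provided by \eqref{hyp:convex} gives, for a.e.\ $\omega$ and every $t\in[\![\s+1:T]\!]$,
\[L_t(\omega,x_{t-1}^\lambda,\Delta x_t^\lambda)\le\lambda\,L_t(\omega,x_{t-1}^0,\Delta x_t^0)+(1-\lambda)\,L_t(\omega,x_{t-1}^1,\Delta x_t^1).\]
Summing over $t$ and taking expectations preserves the inequality: by the discussion following \eqref{hyp:integrability} (via \cite[Prop.~14.28]{RockafellarWets1998}) the $\omega$-functions on both sides are $\mathscr{A}$-measurable and bounded below by a summable function, so their expectations are well defined in $(-\infty,+\infty]$ and monotonicity of the integral applies, while linearity of the expectation splits the right-hand side. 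Adding the convexity estimate for $g$ together with $\esp{x_T^\lambda}=\lambda\esp{x_T^0}+(1-\lambda)\esp{x_T^1}$ yields
\[\esp{\sum_{t=\s+1}^TL_t(\omega,x_{t-1}^\lambda,\Delta x_t^\lambda)}+g\bigl(\esp{x_T^\lambda}\bigr)\le\lambda\bigl(\mathbf{V}_\s(\xi_0)+\varepsilon\bigr)+(1-\lambda)\bigl(\mathbf{V}_\s(\xi_1)+\varepsilon\bigr).\]
Since $\{x_t^\lambda\}_{t=\s}^T$ is feasible for $\xi_\lambda$, the left-hand side majorizes $\mathbf{V}_\s(\xi_\lambda)$; letting $\varepsilon\downarrow0$ gives the claim. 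The only genuinely delicate point is this last interchange of the pointwise convexity inequality with the expectation, which is exactly what \eqref{hyp:convex} and \eqref{hyp:integrability} are there to guarantee; the remainder is routine bookkeeping about convex combinations preserving $\mathscr{G}$-adaptedness and about the affine dependence of the initial-state constraint on $\xi$.
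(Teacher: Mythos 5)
Your proof is correct and rests on the same idea as the paper's: the paper simply observes that $\mathbf{V}_{\s}$ is the infimal projection (over $\{x_t\}_{t=\s}^T\in\mathscr{N}_\s$) of the jointly convex function $\mathcal{L}_\s(\{x_t\},\xi)$ and stops there, whereas you carry out the equivalent $\varepsilon$-argument by hand, which you yourself note in the parenthetical. The extra care you take with adaptedness of convex combinations and with the interchange of the pointwise inequality and the expectation under \eqref{hyp:convex}--\eqref{hyp:integrability} is sound and only makes explicit what the paper leaves implicit.
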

	\begin{proof}
		Let $\mathrm{s} \in [\![\tau: T-1]\!]$. It suffices to notice that the function:
		$$
		\mathcal{L}_s\left(\left\{x_t\right\}_{t=\s}^T, \xi\right):= \begin{cases}\esp{~\sum_{t=\s+1}^T L_t\left(\omega, x_{t-1}, \Delta x_t\right)}+g\left(\esp{x_T}\right), & \text { if } \esp{x_{\s}}=\xi, \\ +\infty & \text { otherwise },\end{cases}
		$$    
		is convex, and that $\mathbf{V}_{\s}(\xi)=\inf \left\{\mathcal{L}_{\s}\left(\left\{x_t\right\}_{t=\s}^T, \xi\right) \mid\left\{x_t\right\}_{t=\s}^T \in \mathscr{N}_{\s}\right\}$.
	\end{proof}

	\subsection{A Dual Representation for the Value Function}
	
	The convex framework considered in this paper prompts for a duality theory that can be derived through the standard perturbation function approach (see, e.g., \cite{RocWet83}).  Inspired by the analysis in \cite{deride2024subgradientevolution} for the deterministic version of this problem and the theory developed in \cite{RocWet83}, we now consider a dual problem to \eqref{eq:StochBolza}. It is worthy to mention that, since the state of the primal problem evolves in a subset of $\mathscr{L}^{\infty}_\s$, it is rather natural that the state of the dual problem, which is a pseudo Bolza problem (it has not exactly the same structure as \eqref{eq:StochBolza}), evolves in a space that can be put in duality with $\mathscr{L}^{\infty}_\s$. A suitable candidate for such task is $\mathscr{L}^1_{\s}=L_{1}(\Omega,\mathscr{A},\mu;\bRn)^{T-s+1}$. Furthermore, as in the deterministic case, the stage costs (dual Lagrangians) and  (dual) terminal cost are obtained by means of the Fenchel conjugate. To be more precise, we set $M_{\tau+1}, \ldots, M_T: \Omega\times \bRn \times \bRn \rightarrow \mathbb{R} \cup\{+\infty\}$ as the dual Lagrangians defined as
	\[M_t(\omega, p, q):=\sup _{x, v \in \bRn}\left\{x \cdot q+v \cdot p-L_t(\omega,x, v)\right\}, \quad \forall p, w \in \bRn, \forall t \in [\![\tau: T-1]\!],\]
	and $f: \bRn \rightarrow \mathbb{R} \cup\{+\infty\}$ as the dual terminal cost given by
	\[f(b):=g^*(-b), \quad \forall b \in \bRn.\]
	
	Therefore, the value function of the dual problem to \eqref{eq:StochBolza} is given, for any $\eta \in \bRn$ and $\mathrm{s} \in [\![\tau : T-1]\!]$, by the following expression:
	\begin{equation}\label{dualvaluefunction}
		\mathbf{W}_{\s}(\eta):=\inf_{\{p_{t}\}_{t=\s}^{T}\in\mathscr{P}_{\s}}\left\{\esp{\sum_{t=\s+1}^{T}M_{t}\left(\omega,\espc{t}{p_{t}},\espc{t}{\Delta p_{t}}\right)} +f(p_{T})\mid \esp{p_{\s}}=-\eta\right\}
	\end{equation}
	Here, $\espc{t}{\cdot}$ stands for the (regular) \emph{conditional expectation} with respect to $\mathscr{G}_{t}$ and $\mathscr{P}_{\s}\subseteq \mathscr{L}^1_{\s}$ is the set given by 
	\[\mathscr{P}_{\s}=\left\{(p_{\s},\ldots,p_{T})\in \mathscr{L}^1_{\s}\mid \begin{matrix}
		p_{t-1}\text{ is } \mathscr{G}_{t}-\text{measurable for }t \in [\![\s+1:T]\!],\\ \espc{\s}{p_{\s}} \text{ and }p_{T} \text{ are constant functions on }\Omega
	\end{matrix}
	\right\}.\]
	
	Note that, as for the primal problem, $\Delta p_{t}=p_{t}-p_{t-1}$. At this point we also remark that, from the theory of convex normal integrands  (c.f.~\cite{RockafellarWets1998}), it follows that $M_{\tau+1}, \ldots, M_T$ are  normal convex integrands, and thus,
	\[\omega\mapsto \sum_{t=\s+1}^{T}M_{t}\left(\omega,\espc{t}{p_{t}},\espc{t}{\Delta p_{t}}\right)\] is an  $\mathscr{A}$-measurable function for any $\left\{p_t\right\}_{t=\s}^T \in \mathscr{L}^1_\s$.

	From now on, we will refer to $\mathbf{V}_{\s}$ given by \eqref{primalvaluefunction} as the \emph{primal value function}, while $\mathbf{W}_{\s}$ given by \eqref{dualvaluefunction} will be called  the \emph{dual value function}.
	
	Similarly as for the continuous-time case, we can extend the definition of both value functions up to time $\s=T$, as follows:
	$$
	\mathbf{V}_T(\xi)=g(\xi) \quad \text { and } \quad \mathbf{W}_T(\eta)=f(-\eta), \quad \forall \xi, \eta \in \bRn.
	$$
	From the definition of the conjugate, it follows that
	$$
	\mathbf{V}_T(\xi)+\mathbf{W}_T(\eta) \geq \xi \cdot \eta, \quad \forall \xi, \eta \in \bRn .
	$$
	Notice that in particular, we have that $\mathbf{W}_T\geq\mathbf{V}_T^*$ and $\mathbf{V}_T\geq\mathbf{W}_T^*$. This weak duality relation becomes strong (with equality) whenever we enforce the transversality condition \(-\eta\in \partial g(\xi)\); this is a straightforward consequence of Fenchel-Young equality \eqref{eq:FYeq}. The next proposition shows that this weak duality relation propagates backward in time. Later on, we will show that the weak duality stated below becomes strong whenever the primal problem is qualified and has a minimizer.
	\begin{prop}\label{prop:weak}
		For any $\s \in [\![\tau: T-1]\!]$, we have that
		$$
		\mathbf{V}_{\s}(\xi)+\mathbf{W}_{\s}(\eta) \geq \xi \cdot \eta, \quad \forall \xi, \eta \in \bRn.
		$$
		Moreover, we also have that $\mathbf{W}_{\s}\geq \mathbf{V}^{*}_{\s}$ and $\mathbf{V}_{\s}\geq \omega^{*}_{\s}$.
	\end{prop}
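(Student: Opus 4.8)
\emph{Proof proposal.} The plan is to establish the inequality directly, for each fixed $\s\in[\![\tau:T-1]\!]$, by pairing an arbitrary admissible primal process with an arbitrary admissible dual process and applying the Fenchel--Young inequality stage by stage; no backward induction on $\s$ is actually needed, since a single telescoping computation handles all stages at once. Fix $\xi,\eta\in\bRn$, take $\{x_t\}_{t=\s}^T\in\mathscr{N}_\s$ with $\esp{x_\s}=\xi$ and $\{p_t\}_{t=\s}^T\in\mathscr{P}_\s$ with $\esp{p_\s}=-\eta$, and assume that both the primal and the dual cost of these processes are finite (if one is $+\infty$, it does not affect the corresponding infimum). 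Since $M_t(\omega,\cdot,\cdot)$ is, up to swapping its two arguments, the convex conjugate of $L_t(\omega,\cdot,\cdot)$, for a.e.\ $\omega$ and every $t\in[\![\s+1:T]\!]$ we have the pointwise estimate
\[
L_t\big(\omega,x_{t-1},\Delta x_t\big)+M_t\big(\omega,\espc{t}{p_t},\espc{t}{\Delta p_t}\big)\ \ge\ x_{t-1}\cdot\espc{t}{\Delta p_t}+\Delta x_t\cdot\espc{t}{p_t}.
\]
Summing over $t$ and taking expectations is legitimate under \eqref{hyp:integrability} and the finiteness assumption: the left-hand side is then integrable, and the right-hand side belongs to $L^1$ because $x_t\in L^\infty$ and $p_t\in L^1$.

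The key step is to simplify the right-hand side. Because $x_{t-1}$ is $\mathscr{G}_{t-1}$-measurable and $\Delta x_t=x_t-x_{t-1}$ is $\mathscr{G}_t$-measurable, while $\mathscr{G}_{t-1}\subset\mathscr{G}_t$, the ``take out what is known'' property together with the tower rule yields $\esp{x_{t-1}\cdot\espc{t}{\Delta p_t}}=\esp{x_{t-1}\cdot\Delta p_t}$ and $\esp{\Delta x_t\cdot\espc{t}{p_t}}=\esp{\Delta x_t\cdot p_t}$. A direct expansion then gives $x_{t-1}\cdot\Delta p_t+\Delta x_t\cdot p_t=x_t\cdot p_t-x_{t-1}\cdot p_{t-1}$, so the sum over $t\in[\![\s+1:T]\!]$ telescopes and we obtain
\[
\esp{\sum_{t=\s+1}^T L_t\big(\omega,x_{t-1},\Delta x_t\big)}+\esp{\sum_{t=\s+1}^T M_t\big(\omega,\espc{t}{p_t},\espc{t}{\Delta p_t}\big)}\ \ge\ \esp{x_T\cdot p_T}-\esp{x_\s\cdot p_\s}.
\]
By the definition of $\mathscr{P}_\s$, $p_T$ is a.s.\ constant and $\espc{\s}{p_\s}$ is a.s.\ constant, hence equal to its own mean $-\eta$; combining this with the $\mathscr{G}_\s$-measurability of $x_\s$ and the constraints $\esp{x_\s}=\xi$, $\esp{p_\s}=-\eta$ gives $\esp{x_T\cdot p_T}=\esp{x_T}\cdot p_T$ and $\esp{x_\s\cdot p_\s}=\esp{x_\s}\cdot\espc{\s}{p_\s}=-\xi\cdot\eta$.

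To conclude I add the terminal costs $g(\esp{x_T})+f(p_T)$ to both sides. Since $f(p_T)=g^*(-p_T)$, the Fenchel--Young inequality \eqref{eq:FYeq} for $g$ gives $g(\esp{x_T})+f(p_T)\ge-\esp{x_T}\cdot p_T$, which exactly cancels the residual term $\esp{x_T}\cdot p_T$, leaving
\[
\Big(\esp{\textstyle\sum_{t}L_t}+g(\esp{x_T})\Big)+\Big(\esp{\textstyle\sum_{t}M_t}+f(p_T)\Big)\ \ge\ \xi\cdot\eta .
\]
Taking the infimum first over admissible primal processes and then over admissible dual processes yields $\mathbf{V}_\s(\xi)+\mathbf{W}_\s(\eta)\ge\xi\cdot\eta$. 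Finally, rewriting this as $\mathbf{W}_\s(\eta)\ge\xi\cdot\eta-\mathbf{V}_\s(\xi)$ and taking the supremum over $\xi\in\bRn$ gives $\mathbf{W}_\s\ge\mathbf{V}_\s^*$, and the symmetric manipulation gives $\mathbf{V}_\s\ge\mathbf{W}_\s^*$. The main point requiring care is the bookkeeping in the middle paragraph: correctly matching the adaptedness of the primal process against the conditional expectations built into the dual stage costs, and verifying that the telescoping boundary terms reproduce precisely $\xi\cdot\eta$ --- this is where the constancy of $p_T$ and of $\espc{\s}{p_\s}$, together with assumption \eqref{hyp:integrability} and the restriction to finite-cost processes, enter.
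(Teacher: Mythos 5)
Your proposal is correct and follows essentially the same route as the paper: the stagewise Fenchel--Young inequality for the pair $(L_t,M_t)$, removal of the conditional expectations via adaptedness of the primal process and the tower rule, telescoping of the boundary terms using the constancy of $p_T$ and $\espc{\s}{p_\s}$, and a final Fenchel--Young step for $g$ and $f$. The only differences are cosmetic (you expand $x_{t-1}\cdot\Delta p_t+\Delta x_t\cdot p_t$ after stripping the conditional expectations rather than before), so no further comment is needed.
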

	
	\begin{proof}
		Take $\s \in [\! [\tau: T-1 ]\!]$. It suffices to prove the inequality for $\xi\in {\rm dom}(\mathbf{V}_{\s})$ and $\eta\in{\rm dom}(\mathbf{W}_{\s})$, as the inequality holds trivially if either value is $+\infty$.  The proof does not assume $\mathbf{V}_\s$ or $\mathbf{W}_\s$ are proper; we will, in fact, deduce that they must be bounded from below.
		
		Let $\left\{x_t\right\}_{t=\mathrm{s}}^T \in \mathscr{N}_{\s}$ and $\left\{p_t\right\}_{t=\mathrm{s}}^T \in \mathscr{P}_{\s}$ be feasible trajectories for the optimization problems \eqref{primalvaluefunction} and \eqref{dualvaluefunction}, respectively. From the definition of conjugate, we have, for every $\omega\in \Omega$:
		\begin{align*} L_t\left(\omega,x_{t-1}, \Delta x_t\right)+M_t\left(\omega,(\espc{t}{p_t}), (\espc{t}{\Delta p_t})\right) &\geq x_{t-1} \cdot (\espc{t}{\Delta p_t})+\Delta x_t \cdot (\espc{t}{p_t}),\\
			&=x_t \cdot (\espc{t}{p_t})-x_{t-1} \cdot \espc{t}{p_{t-1}}.
		\end{align*} 
		Thus, we have that
		$$
		\esp{L_t\left(\omega,x_{t-1}, \Delta x_t\right)+M_t\left(\omega,(\espc{t}{p_t}), (\espc{t}{\Delta p_t})\right)}\geq \esp{x_t \cdot (\espc{t}{p_t})-x_{t-1} \cdot \espc{t}{p_{t-1}}}.
		$$
		Using the properties of conditional expectation, we obtain
		$$
		\esp{L_t\left(\omega,x_{t-1}, \Delta x_t\right)+M_t\left(\omega,(\espc{t}{p_t}), (\espc{t}{\Delta p_t})\right)}\geq \esp{x_t \cdot p_t-x_{t-1} \cdot p_{t-1}}.
		$$
		By the linearity of expectation and the definition of $\mathscr{P}_{\s}$, we have
		\begin{align*}
			\esp{\sum_{t=\s+1}^T L_t\left(\omega,x_{t-1}, \Delta x_t\right)+\sum_{t=\s+1}^T M_t\left(\omega,(\espc{t}{p_t}), (\espc{t}{\Delta p_t})\right)} &\geq \esp{x_T \cdot p_T-x_{\s} \cdot p_{\s}},\\
			&=p_T\cdot\esp{x_T}+\xi \cdot \eta.
		\end{align*}
		Since $f(b)=g^*(-b)$, the definition of the conjugate of $g$ allows us to deduce that, almost everywhere:
		$$
		g\left(\esp{x_T}\right)+f(p_{T})\geq -p_{T}\cdot\esp{x_T}.
		$$
		It follows that:
		$$
		\esp{\sum_{t=\s+1}^T L_t\left(\omega,x_{t-1}, \Delta x_t\right)}+g\left(\esp{x_T}\right)+ \esp{\sum_{t=\s+1}^T M_t\left(\omega,\espc{t}{p_t},\espc{t}{\Delta p_t}\right)}+f(p_{T})\geq \xi\cdot \eta.
		$$
		Taking the infimum over $\{x_{t}\}_{t=\s}^{T}\in \mathscr{N}_{\s}$ and $\{p_{t}\}_{t=\s}^{T}\in \mathscr{P}_{\s}$, we obtain the desired inequality. In particular, we deduce (a posteriori) that $\mathbf{V}(\xi)>-\infty$ and $\mathbf{W}_{\s}(\eta)>-\infty$.
	\end{proof}
	
	\subsection{Strong duality}
	
	Let us now focus on proving a strong duality result, which in our case means that the dual value function is the conjugate of the primal value function. For such purpose we introduce some notation first. For $x \in \bRn$, $t \in[\![\tau+1: T]\!]$ and $\omega \in \Omega$ we set
	\[
	\Gamma_L(t,\omega, x):=\left\{v \in \bRn \mid L_{t}(\omega,x,v) \in \mathbb{R}\right\}\]
	and 
	\[\mathbf{X}(t,\omega):=\left\{x \in \bRn \mid \Gamma_L(t,\omega, x) \neq \emptyset\right\}.\]
	\begin{rem}
		Notice that the minimization in  \eqref{primalvaluefunction} can be restricted to trajectories whose initial state $\esp{x_\s}=\xi$ is brought to the target $\dom(g)$ at time $t=T$. This means that the set $\dom(g)$ can be understood as a terminal constraint, implicitly  encoded in the formulation of the problem.  Similarly, by allowing each $L_t$ to take infinite values, we are handling implicitly constraints over the state of the system $x_t$ and the variation $\Delta x_t$. Indeed, any feasible trajectory of the Bolza problem  \eqref{primalvaluefunction} must satisfy, a.s.~on $\omega\in\Omega$:
		\begin{eqnarray*}
			x_{t-1}\in\X(t,\omega)\quad\text{and}\quad\Delta x_t\in \Gamma_{L}(t,\omega,x_{t-1}),\quad\forall t\in[\![\tau+1:T]\!].
		\end{eqnarray*}
		In other words, the set-valued maps $\Gamma_{L}$ and $\X$ correspond respectively to the dynamics of the system and  to the (time-dependent) state constraint.
	\end{rem}
	
	The qualification conditions we use in this paper are the following:
	
	\begin{assumption} \label{h1} The primal problem \eqref{primalvaluefunction} satisfies the bounded recourse condition, i.e., for any $t\in [\! [\tau+1: T]\!]$ it is true that:
		\begin{itemize}
			\item [a)] For any $\rho>0$ there is a summable function $\beta: \Omega \rightarrow \mathbb{R}$ such that, a.s.~on $\omega \in \Omega$ we have      \[L_t\left(\omega, x, v\right) \leq\beta\text{ whenever }x \in \mathbf{X}(t,\omega) \text { and }\Gamma_L\left(t,\omega, x\right) \text { and }|x|,|v| \leq \rho\]
			
			\item [b)] For every $\rho>0$ there is a $\rho^{\prime}>0$ such that  a.s.~on $\omega \in \Omega$, we have that if $x \in \mathbf{X}(t,\omega)$ with $\left|x\right| \leq \rho$, then, setting set $\mathbf{X}(T+1,\omega)=\bRn$, 
			\[\exists v \in \Gamma_L\left(t,\omega, x\right) \text { such that }  x+v \in \mathbf{X}(t+1,\omega) \text { and }\left|x+v\right| \leq \rho^{\prime}.
			\]
		\end{itemize}
	\end{assumption}
	
	\begin{assumption} \label{h2} The primal problem \eqref{primalvaluefunction} satisfies  the strict feasibility condition, i.e., there exists some $\bar{x} \in \mathscr{N}_{\tau}, \varepsilon>0$ and summable functions $\alpha_{\tau+1},\ldots,\alpha_{T}$, such that $\esp{\bar{x}_T} \in \operatorname{dom}(g)$, and for $t\in [\! [\tau+1: T]\!]$ a.s.~on $\omega \in \Omega$, it also holds that
		\[x \in \mathbf{X}(t,\omega), \quad v \in \Gamma_L\left(t,\omega, x\right) \quad \text { and } \quad L_t\left(\omega, x, v\right) \leq \alpha_t\]
		whenever
		$\left|x-\bar{x}_{t-1}\right| \leq \varepsilon$ and $\left|v-\Delta \bar{x}_t\right| \leq \varepsilon$.
	\end{assumption}
	
	\begin{assumption} \label{h3} The set-valued map $\omega \rightarrow \mathbf{X}(t,\omega)$ is $\mathscr{G}_{t-1}-$measurable for each $t\in [\![\tau+1:T]\!]$
	\end{assumption}

	We are now in a position to establish a strong duality relation between the primal and dual value functions.
	
	\begin{prop}\label{prop:conjugacy_1}
		Under Assumptions \ref{h1}-\ref{h3} and assuming that \eqref{primalvaluefunction} attains its infimum, it follows that $\mathbf{V}^{*}_{\s}=\mathbf{W}_{\s}$ for each $\mathrm{s} \in[\![\tau: T-1]\!]$. Furthermore, the infimum in the definition of $\mathbf{W}_{\s}(\eta)$ is attained for any $\eta\in \operatorname{dom}(\mathbf{W}_{\s})$.
	\end{prop}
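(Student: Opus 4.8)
The plan is to establish the strong duality $\mathbf{V}_{\s}^{*}=\mathbf{W}_{\s}$ by a Fenchel-Rockafellar (perturbational) argument applied to the stochastic Bolza problem, in the spirit of \cite{RocWet83} and of the deterministic treatment in \cite{deride2024subgradientevolution}, using Proposition~\ref{prop:weak} to cut the work in half. Since $\mathbf{W}_{\s}\geq\mathbf{V}_{\s}^{*}$ always holds (hence $\dom(\mathbf{W}_{\s})\subseteq\dom(\mathbf{V}_{\s}^{*})$), it suffices to show that for every $\eta\in\dom(\mathbf{V}_{\s}^{*})$ there is a process $\{p_t\}_{t=\s}^{T}\in\mathscr{P}_{\s}$ with $\esp{p_{\s}}=-\eta$ such that $\esp{\sum_{t=\s+1}^{T}M_t(\omega,\espc{t}{p_t},\espc{t}{\Delta p_t})}+f(p_T)\leq\mathbf{V}_{\s}^{*}(\eta)$; this simultaneously yields $\mathbf{W}_{\s}(\eta)\leq\mathbf{V}_{\s}^{*}(\eta)$, forces $\dom(\mathbf{V}_{\s}^{*})=\dom(\mathbf{W}_{\s})$, and exhibits a minimizer for $\mathbf{W}_{\s}(\eta)$.

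Fix $\eta\in\dom(\mathbf{V}_{\s}^{*})$ and rewrite, from the definition of the conjugate, $\mathbf{V}_{\s}^{*}(\eta)=-\inf\{\esp{\sum_{t=\s+1}^{T}L_t(\omega,x_{t-1},\Delta x_t)}+g(\esp{x_T})-\eta\cdot\esp{x_{\s}}\mid\{x_t\}_{t=\s}^{T}\in\mathscr{N}_{\s}\}$, a free-left-endpoint Bolza problem carrying the linear perturbation $-\eta\cdot\esp{x_{\s}}$. I would then detach the increments by introducing $v_t:=\Delta x_t$ with the dynamics $x_t=x_{t-1}+v_t$ and the non-anticipativity relations $x_t=\espc{t}{x_t}$, and form the Lagrangian by dualizing the dynamics through a costate process $\{p_t\}\subset\mathscr{L}^{1}_{\s}$. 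Two features set this apart from the deterministic case: dualizing the relations $x_t=\espc{t}{x_t}$ is what produces the conditional expectations $\espc{t}{p_t}$, $\espc{t}{\Delta p_t}$ inside the dual stage costs $M_t$ and the measurability constraints defining $\mathscr{P}_{\s}$ (notably $p_{t-1}$ being $\mathscr{G}_t$-measurable), while the endpoint terms $\esp{x_{\s}}$ and $g(\esp{x_T})$ produce, after conjugation, the constraint $\esp{p_{\s}}=-\eta$ and the dual terminal cost $f(p_T)=g^{*}(-p_T)$. Performing the inner minimizations with Rockafellar's interchange of infimum and expectation for normal integrands (\cite[Thm.~14.60]{RockafellarWets1998})---admissible since each $L_t$ is a normal convex integrand and, by Assumption~\ref{h3}, $\omega\mapsto\mathbf{X}(t,\omega)$ is $\mathscr{G}_{t-1}$-measurable so the required selections respect the filtration---turns $\inf_{x,v}\{L_t(\omega,x,v)-x\cdot q-v\cdot p\}$ into $-M_t(\omega,p,q)$ and $\inf_a\{g(a)+b\cdot a\}$ into $-f(b)$, whence the Lagrangian dual is recognized as $-\mathbf{W}_{\s}(\eta)$, i.e. as the negative of the infimum in \eqref{dualvaluefunction} over $\mathscr{P}_{\s}$ subject to $\esp{p_{\s}}=-\eta$.

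It remains to exclude a duality gap and to obtain dual attainment. The strict feasibility condition (Assumption~\ref{h2}) plays the role of the Slater-type qualification: restricting its strictly feasible trajectory $\bar x$ to $[\![\s:T]\!]$ provides a point at which the stage integrands and the terminal term remain finite on a full neighbourhood, which both closes the gap and furnishes a dual optimizer in the ambient space $\mathscr{L}^{1}_{\s}$. The bounded recourse condition (Assumption~\ref{h1}), together with \eqref{hyp:integrability}, guarantees that the perturbed primal value is finite---so that the duality is meaningful---and that the normal-integrand calculus used above applies, legitimizing the interchanges. Finally, the hypothesis that \eqref{primalvaluefunction} attains its infimum enters through complementary slackness: taking a primal optimizer $\{x_t^{*}\}$, the Fenchel-Young equalities that must then hold termwise in the chain of inequalities from the proof of Proposition~\ref{prop:weak} allow one to refine the ambient dual optimizer into one that genuinely lies in $\mathscr{P}_{\s}$---with each $p_{t-1}^{*}$ taken $\mathscr{G}_t$-measurable and $\espc{\s}{p_{\s}^{*}}$, $p_T^{*}$ constant---and whose cost equals $\mathbf{V}_{\s}^{*}(\eta)$. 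This is the process required in the reduction.

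I expect the main obstacle to be precisely this last refinement: exhibiting a dual optimal process with the sharpened measurability demanded by $\mathscr{P}_{\s}$ (the conditional-expectation ``projection'' coupled with the backward adjoint recursion). In the finite-dimensional deterministic setting of \cite{deride2024subgradientevolution} this is routine, whereas here it hinges on measurable-selection arguments from the stochastic Bolza theory of \cite{RocWet83}, and is exactly where Assumptions~\ref{h1}--\ref{h3} and the primal attainment hypothesis are jointly used. A secondary technical difficulty is to verify that the $\mathscr{L}^{\infty}_{\s}$--$\mathscr{L}^{1}_{\s}$ pairing together with the lower bounds from \eqref{hyp:integrability} and Assumption~\ref{h1} make all the interchanges of infimum, expectation and conditional expectation rigorous.
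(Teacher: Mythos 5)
Your overall architecture --- reduce via Proposition~\ref{prop:weak} to exhibiting, for each $\eta\in\dom(\mathbf{V}_\s^*)$, a feasible dual process of cost at most $\mathbf{V}_\s^*(\eta)$, and obtain that process by a perturbational (Fenchel--Rockafellar) argument for the tilted Bolza problem with endpoint cost $\ell(a,b)=g(b)-a\cdot\eta$ --- is exactly the route the paper takes. The paper simply compresses the middle of your argument by citing \cite[Thms.~5 and 6]{RocWet83}, which deliver in one stroke both the identification of the Lagrangian dual with the pseudo-Bolza problem \eqref{dualvaluefunction} and the existence of a multiplier $p\in\partial\Phi(0)\cap\mathscr{P}_\s$ realizing $\mathbf{V}_\s^*(\eta)$.

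The gap sits at the step you yourself flag as the ``main obstacle,'' and it is not quite where you locate it. In the $\mathscr{L}^\infty_\s$--$\mathscr{L}^1_\s$ pairing, a subgradient of the perturbation function $\Phi$ at $0$ a priori lives in $(\mathscr{L}^\infty_\s)^*$, which contains purely finitely additive (singular) functionals; the decisive issue is to show the multiplier has no singular part, i.e.\ is representable by an $\mathscr{L}^1_\s$ process. Your proposal assigns the production of an $\mathscr{L}^1_\s$ dual optimizer to the strict feasibility Assumption~\ref{h2} alone, and reserves the primal-attainment hypothesis for a measurability refinement via termwise complementary slackness. In the paper the roles are allocated differently: Assumptions~\ref{h1}--\ref{h3} yield the subgradient of $\Phi$ at $0$, and it is precisely the hypothesis that \eqref{primalvaluefunction} attains its infimum that guarantees this subgradient can be taken in $\mathscr{L}^1_\s$, which is what \cite[Thm.~6]{RocWet83} requires in order to conclude zero gap and dual attainment in $\mathscr{P}_\s$. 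The conditional expectations $\espc{t}{p_t}$, $\espc{t}{\Delta p_t}$ and the constraints defining $\mathscr{P}_\s$ do arise from transposing non-anticipativity, as you anticipate, but none of this is routine complementary slackness: it is the content of the cited theorems, and your sketch neither carries it out nor reduces it to a result you could invoke. As written, the proposal is a correct plan with the crux deferred rather than a proof.
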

	\begin{proof}
		Let us take $\mathrm{s} \in[\![\tau: T-1]\!]$. Note that from Assumptions \ref{h1}-\ref{h3}, we have $\mathbf{V}_{\s}(\esp{\bar{x}_{\s}})<+\infty$. In particular, it follows that:
		$$
		\mathbf{V}^{*}_{\s}(\eta)\geq \esp{\bar{x}_{\s}}\cdot \eta -\mathbf{V}_{\s}(\esp{\bar{x}_{\s}})>-\infty,\quad \forall \eta \in \bRn.
		$$
		First, we prove that $\mathbf{W}_{\s}=\mathbf{V}^{*}_{\s}$.  Note that for $\eta \in \operatorname{dom}(\mathbf{V}^{*}_{\s})$, we have that
		$$
		-\mathbf{V}_{\s}^*(\eta)=\inf _{\xi \in \bRn}\left\{\mathbf{V}_{\s}(\xi)-\xi \cdot \eta\right\}=\inf _{\mathbf{x} \in \mathscr{N}_{\s}}\left\{\esp{\sum_{t=\s+1}^T L_t\left(\omega,x_{t-1}, \Delta x_t\right)}+g\left(\esp{x_T}\right)-\esp{x_{\s}} \cdot \eta\right\},
		$$
		where we have considered $\mathbf{x}=\{x_{t}\}_{t=\s}^{T}$. Define $\ell(a,b)=g(b)-a\cdot \eta$.  It is not difficult to see that $\operatorname{dom}(\ell)=\bRn\times \operatorname{dom}(g)$. Therefore, by virtue of \cite[Thm.5]{RocWet83}, it follows that Assumptions \ref{h1}-\ref{h3} imply the existence of $p=(p_{\s},\ldots,p_{T})\in \mathscr{L}^1_{\s}$ such that $p\in \partial \Phi(0)$, where for any $\mathbf{y}=\{y_{t}\}_{t=\s}^{T}\in \mathscr{L}^{\infty}_{\s}$, the perturbation function $\Phi$ is given by:
		\begin{equation}\label{eq:perturbation}
			\Phi(\mathbf{y})=\inf_{x\in \mathscr{N}_{\s}}\left\{ \esp{\sum_{t=\s+1}^T L_t\left(\omega,x_{t-1}-\espc{t}{\Delta y_{t-1}}, \Delta x_t+\mathbb{E}^{t}_{\Delta}\left[y_{t-1}\right]+\espc{t}{y_{t}}\right)}+ \ell\left(\esp{x_{\s}+y_{\s}},\esp{x_{T}}\right)\right\}.  
		\end{equation}
		Here $\mathbb{E}^{t}_{\Delta}[z]:=\espc{t}{z}-\espc{t-1}{z}$. Note that the assumption that \eqref{primalvaluefunction} attaining its infimum guarantees that $p$ is in $\mathscr{L}^1_{\s}$, which is needed in \cite[Thm.6]{RocWet83}. 
		From \cite[Thm.6]{RocWet83}, we also obtain that $\mathbf{V}_\s^*(\eta)$ is the optimal value of the problem:
		\begin{equation}\label{prob:aux1}
			\min_{\mathbf{p}\in \mathscr{P}_{\s}}\left\{\esp{\sum_{t=\s+1}^{T}M_{t}(\omega,(\espc{t}{p_{t}}),(\espc{t}{\Delta p_{t}})}+\ell^{*}(\espc{\s}{p_{\s}},-p_{T})\mid \mathbf{p}=\{p_{t}\}_{t=\s}^{T}\right\},
		\end{equation}
		where the minimum is attained by $p=(p_{\s},\ldots,p_{T})\in \mathscr{P}_{\s}\cap\partial\Phi(0)$.  To show this equals $\mathbf{W}_\s(\eta)$, we focus on the $\ell^*$ term.  By definition, $\ell(a,b)=g(b)-a\cdot\eta$, so its conjugate $\ell^{*}(a,b)$ is finite only if $a=-\eta$, in which case $\ell^*(a,b)=g^{*}(b)$.  Combining this definition with the definition of the terminal cost  $f(p_T)=g^*(-p_T)$ in problem~\eqref{prob:aux1}, we recover the definition of $\mathbf{W}_s(\eta)$ in \eqref{dualvaluefunction}.  We thus conclude that $\mathbf{W}_\s=\mathbf{V}^*_\s$.  The equality also holds for $\eta\not\in{\rm dom}(\mathbf{V}^*)$ by Proposition~\ref{prop:weak}. 
	\end{proof}
	
	\section{Discrete-time method of characteristics}
	We now present and prove a discrete-time method of characteristics, which describes the evolution of the subgradients of the primal value function by means of a discrete-time Hamiltonian system.
	
	Let $H_{\tau+1},\ldots,H_T:\Omega\times\bRn\times\bRn\to\bR\cup\{\pm\infty\}$ be the Hamiltonians associated with the primal problem \eqref{primalvaluefunction}, that is
	\[H_t(\omega,x,p):=\sup_{v\in\bRn}\left\{p\cdot v-L_t(\omega,x,v)\right\},\,~\forall \omega\in \Omega \mbox{ and } x,p\in\bRn,\ \forall t\in[\![\tau+1:T]\!].\]
	For each $t\in[\![\tau+1:T]\!]$ and $\omega\in \Omega$ fixed, $H_t(\omega,\cdot,\cdot)$ is a concave-convex function from $\bRn\times \bRn$ to $\mathbb{R}\cup\{\pm\infty\}$, thus its sub-differential is given by \eqref{eq:subdif_saddle}. In particular, we have that $(-w,v)\in \partial H_t(\omega,\bar x,\bar p)$ if and only if
	\[H_t(\omega,x,\bar p)+w\cdot(x-\bar x)\leq H_t(\omega,\bar x,\bar p)\leq H_t(\omega,\bar x,p)-v\cdot(p-\bar p),\qquad\forall x,p\in\bRn.\]
	\begin{dfn}\label{dfn:Hamiltonian}
		Let $\s\in [\![\tau:T-1]\!]$ be given. We say that $\{(x_t,p_t)\}_{t=\s}^T\subset\mathscr{N}_{\tau}\times\mathscr{P}_{\tau}$ is a \emph{discrete-time Hamiltonian trajectory} on $[\![\s:T]\!]$ provided that, a.s.~on $\omega \in \Omega$
		\begin{equation}\label{eq:dHt}
			(-\espc{t}{\Delta p_t},\Delta x_t)\in\partial H_t(\omega,x_{t-1},\espc{t}{p_t}),\qquad\forall t\in[\![\s+1,T]\!].
		\end{equation}
	\end{dfn}
	
	The method of characteristics, the main result of this paper, is outcome of the following two results. The first one, says that the transversality condition transported backward on time through a discrete-time Hamiltonian trajectory, leads to a subgradient of the value function. Observe that for the following result no qualification conditions are needed.
	
	\begin{teo}\label{thm:charact_thrm_1}
		Let  $\xi,\eta\in\bRn$ be given. Suppose that there exists a  \emph{discrete-time Hamiltonian trajectory} on $[\![\tau:T]\!]$, say  $\{(x_t,p_t)\}_{t=\tau}^T\subset\mathscr{N}_{\tau}\times\mathscr{P}_{\tau}$, such that $(\esp{x_\tau},\esp{p_\tau})=(\xi,-\eta)$ and that satisfies the transversality condition
		\begin{equation}\label{eq:transversality}
			-p_T\in\partial g\left(\esp{x_T}\right).
		\end{equation}
		Then, $\eta\in\partial\mathbf{V}_\tau(\xi)$ and $-\esp{p_\s}\in\partial \mathbf{V}_s(\esp{x_\s})$ for any $\s\in [\![\tau:T-1]\!]$.
	\end{teo}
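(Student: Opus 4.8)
The plan is to run the classical conjugate-duality argument, adapted to the non-anticipative filtration bookkeeping: the given Hamiltonian trajectory will be shown to certify primal and dual optimality simultaneously and, at the same time, to saturate the Fenchel--Young inequality between $\mathbf{V}_\s$ and $\mathbf{V}_\s^{*}$. First I would recast the Hamiltonian inclusion \eqref{eq:dHt} as a pointwise Fenchel--Young equality for the stage costs. Using the concave--convex Hamiltonian/Lagrangian correspondence for convex normal integrands, $(-\espc{t}{\Delta p_t},\Delta x_t)\in\partial H_t(\omega,x_{t-1},\espc{t}{p_t})$ is equivalent to $(\espc{t}{\Delta p_t},\espc{t}{p_t})\in\partial L_t(\omega,x_{t-1},\Delta x_t)$, and hence, by \eqref{eq:FYeq} applied to $L_t$ and its conjugate $M_t$, to
\[
L_t(\omega,x_{t-1},\Delta x_t)+M_t\!\left(\omega,\espc{t}{p_t},\espc{t}{\Delta p_t}\right)=x_{t-1}\cdot\espc{t}{\Delta p_t}+\Delta x_t\cdot\espc{t}{p_t},
\]
holding a.s.\ on $\Omega$ for every $t\in[\![\tau+1:T]\!]$. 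In the same way, the transversality condition \eqref{eq:transversality} together with \eqref{eq:FYeq} for $g$ yields $g(\esp{x_T})+f(p_T)=-p_T\cdot\esp{x_T}$ (recall $f=g^{*}(-\,\cdot\,)$); in particular $\esp{x_T}\in\dom(g)$.

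Next I would integrate and sum. By \eqref{hyp:integrability} and the integrability of the trajectory ($x_t\in\mathscr{L}^{\infty}_\tau$, $p_t\in\mathscr{L}^{1}_\tau$), each $L_t+M_t$ coincides a.s.\ with a summable function, so all the relevant expectations are finite. The expectation of the right-hand side above telescopes exactly as in the proof of Proposition~\ref{prop:weak}: by the tower property and the adaptedness constraints defining $\mathscr{N}_\tau$ and $\mathscr{P}_\tau$,
\[
\sum_{t=\tau+1}^{T}\esp{x_{t-1}\cdot\espc{t}{\Delta p_t}+\Delta x_t\cdot\espc{t}{p_t}}=p_T\cdot\esp{x_T}+\xi\cdot\eta,
\]
and adding the terminal identity gives
\[
\Big(\esp{\textstyle\sum_{t=\tau+1}^{T}L_t}+g(\esp{x_T})\Big)+\Big(\esp{\textstyle\sum_{t=\tau+1}^{T}M_t}+f(p_T)\Big)=\xi\cdot\eta.
\]
Since $\{x_t\}$ is feasible for $\mathbf{V}_\tau(\xi)$ and $\{p_t\}$ for $\mathbf{W}_\tau(\eta)$, the left-hand side is at least $\mathbf{V}_\tau(\xi)+\mathbf{W}_\tau(\eta)$, while Proposition~\ref{prop:weak} gives $\mathbf{V}_\tau(\xi)+\mathbf{W}_\tau(\eta)\ge\xi\cdot\eta$; hence all these inequalities are equalities, so $\{x_t\}$ is primal-optimal, $\{p_t\}$ is dual-optimal, and $\mathbf{V}_\tau(\xi)+\mathbf{W}_\tau(\eta)=\xi\cdot\eta$. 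Combining the latter with $\mathbf{W}_\tau\ge\mathbf{V}_\tau^{*}$ (Proposition~\ref{prop:weak}) and the trivial inequality $\mathbf{V}_\tau(\xi)+\mathbf{V}_\tau^{*}(\eta)\ge\xi\cdot\eta$ forces $\mathbf{V}_\tau(\xi)+\mathbf{V}_\tau^{*}(\eta)=\xi\cdot\eta$, i.e.\ $\eta\in\partial\mathbf{V}_\tau(\xi)$ by \eqref{eq:FYeq}. For a general $\s\in[\![\tau:T-1]\!]$, I would note that the restriction $\{(x_t,p_t)\}_{t=\s}^{T}$ is again a discrete-time Hamiltonian trajectory on $[\![\s:T]\!]$ (one should check en route that it still obeys the admissibility constraints built into $\mathscr{P}_\s$) with boundary data $(\esp{x_\s},\esp{p_\s})$ and still satisfying \eqref{eq:transversality}, and rerun the computation with the sums starting at $t=\s+1$; the boundary term now reduces, by the adaptedness of $x_\s$ and the definition of $\mathscr{P}_\s$, to $\esp{x_\s\cdot p_\s}=\esp{x_\s}\cdot\esp{p_\s}$, and the same argument yields $-\esp{p_\s}\in\partial\mathbf{V}_\s(\esp{x_\s})$.

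The main obstacle is the very first step: making rigorous the equivalence between the Hamiltonian subdifferential inclusion \eqref{eq:dHt} and the Fenchel--Young equality for $L_t$. Since $H_t(\omega,\cdot,\cdot)$ is only concave--convex and may take the values $\pm\infty$, one must invoke the precise conjugacy calculus relating $H_t$, $L_t$ and $M_t$ for convex normal integrands, and must also ensure that the resulting identity holds $\mathscr{A}$-measurably and a.s.\ on $\Omega$ (not merely for a fixed $\omega$). A secondary, more routine point is the measurability/integrability bookkeeping: verifying that $\omega\mapsto M_t(\omega,\espc{t}{p_t},\espc{t}{\Delta p_t})$ is summable and that the telescoping of conditional expectations is valid, which is exactly where \eqref{hyp:convex}, \eqref{hyp:integrability} and the choice of the dual pairing between $\mathscr{L}^{\infty}_\s$ and $\mathscr{L}^{1}_\s$ are used.
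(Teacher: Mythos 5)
Your proposal is correct and reaches the stated conclusion, but after the first step it takes a genuinely different route from the paper. Both arguments open the same way: the Hamiltonian inclusion \eqref{eq:dHt} is converted into the Euler--Lagrange relation $(\espc{t}{\Delta p_t},\espc{t}{p_t})\in\partial L_t(\omega,x_{t-1},\Delta x_t)$ via the conjugacy between $H_t$ and $L_t$ (the paper cites \cite[Theorem 37.5]{RocBook70} for precisely the step you single out as the main obstacle). From there the paper does not compute: it recognizes the Euler--Lagrange relation together with the transversality condition \eqref{eq:tc} as the Rockafellar--Wets optimality conditions for the auxiliary problem \eqref{eq:auxp}, and invokes \cite[Thms.~4 and 6]{RocWet83} to conclude that $\{x_t\}$ is optimal for \eqref{eq:auxp}, that $\{p_t\}\in\partial\Phi(0)\cap\mathscr{L}^1_{\s}$ is an optimal dual trajectory realizing $\mathbf{W}_\tau(\eta)$, and that $\val\eqref{eq:auxp}=-\mathbf{W}_\tau(\eta)=\mathbf{V}_\tau(\xi)-\xi\cdot\eta$. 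You instead saturate, term by term, the chain of inequalities already established in Proposition~\ref{prop:weak}: pointwise Fenchel--Young equalities for each $L_t/M_t$ pair and for $g/f$, followed by the same telescoping of conditional expectations, yielding $(\text{primal cost})+(\text{dual cost})=\xi\cdot\eta$ and forcing every inequality in the weak-duality argument to be an equality. Your route is more elementary and self-contained (nothing beyond Proposition~\ref{prop:weak} and normal-integrand conjugacy is needed), and your final deduction $\mathbf{W}_\tau\ge\mathbf{V}_\tau^{*}$ $\Rightarrow$ $\eta\in\partial\mathbf{V}_\tau(\xi)$ is actually more explicit than the paper's, which passes directly from $\mathbf{V}_\tau(\xi)+\mathbf{W}_\tau(\eta)=\xi\cdot\eta$ to the subgradient claim. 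What the paper's route buys in exchange is the identification of $\{p_t\}$ as an element of $\partial\Phi(0)$, a fact it reuses in the proof of Theorem~\ref{thm:charact_thrm_2}.

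Two loose ends are shared by both arguments and worth tightening in yours. First, ``all the relevant expectations are finite'' does not quite follow from the a.s.\ finiteness of $L_t+M_t$: hypothesis \eqref{hyp:integrability} gives $\esp{L_t}\in(-\infty,+\infty]$ and hence $\esp{M_t}\in[-\infty,+\infty)$, so splitting $\esp{L_t+M_t}$ into $\esp{L_t}+\esp{M_t}$ requires ruling out the case $\esp{L_t}=+\infty$, $\esp{M_t}=-\infty$; the paper hides this inside the citation of \cite{RocWet83}. Second, for the extension to a general $\s$, the restricted trajectory $\{p_t\}_{t=\s}^{T}$ must lie in $\mathscr{P}_{\s}$, which demands that $\espc{\s}{p_\s}$ be constant --- this is exactly what makes the boundary term collapse to $\esp{x_\s}\cdot\esp{p_\s}$, and it is not automatic from membership in $\mathscr{P}_\tau$. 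You flag this explicitly; the paper asserts it without comment.
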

	\begin{proof}
		Define the function $\ell:\bRn\times\bRn\to\bR\cup\{\pm\infty\}$ as
		\[\ell(a,b):=g(b)-a\cdot\eta,\qquad\forall a,b\in\bRn.\]
		Note that it is a convex, proper, and l.s.c function.  
		Let  $\{(x_t,p_t)\}_{t=\tau}^T\subset\mathscr{N}_{\tau}\times\mathscr{P}_{\tau}$ be a Hamiltonian trajectory satisfying the hypotheses on the statement.  Then, by condition \eqref{eq:transversality} and $\esp{p_\tau}=-\eta$, we have that
		\begin{equation*}
			(\esp{p_\tau},-p_T)\in\partial\ell(\esp{x_\tau},\esp{x_T}).	
		\end{equation*}
		As $(p_{\tau},\ldots,p_{T})\in \mathscr{P}_{\tau}$, $\espc{\tau}{p_\tau}$ and $p_{T}$ are constants, which yields:
		\begin{equation}\label{eq:tc}
			(\espc{\tau}{p_\tau},-p_T)\in\partial\ell(\esp{x_\tau},\esp{x_T}).	
		\end{equation}
		Furthermore, since $H_t(\omega,x,p)=(L_t(\omega,x,\cdot))^*(p)$, by virtue of \cite[Theorem 37.5]{RocBook70}, the Hamiltonian inclusion \eqref{eq:dHt} is equivalent (a.s.~on $\omega$) to the \emph{discrete-time Euler-Lagrange relation} given by
		\begin{equation}\label{eq:dEUr}
			(\espc{t}{\Delta p_t},\espc{t}{p_t})\in\partial L_t(\omega,x_{t-1},\Delta x_t),\qquad\forall t\in [\![\tau+1:T]\!].
		\end{equation}
		According to \cite[Theorem 4]{RocWet83}, these are the optimality conditions for the auxiliary problem 
		\begin{equation}\label{eq:auxp}\tag{$\mathcal{P}_0$}
			\inf_{\mathbf{y}\in\mathscr{N}_{\tau}}\left\{\esp{\sum_{t=\tau+1}^TL_t(\omega, y_{t-1},\Delta y_t)}+g\left(\esp{y_T}\right)-\esp{y_\tau}\cdot\eta\right\}=\inf_{y\in\bRn}\left\{\mathbf{V}_\tau(y)-y\cdot\eta\right\}.
		\end{equation}
		Consequently, $\{(x_t,p_t)\}_{t=\tau}^T$ is an optimal solution for \eqref{eq:auxp}, and $\{p_t\}_{t=\tau}^T$ is an associated dual solution in $\partial\Phi(0)\cap \mathscr{L}^1_{\s}$ (where $\Phi$ is the perturbation function from \eqref{eq:perturbation}). By \cite[Theorem 6]{RocWet83}, it then follows that $\{p_t\}_{t=\tau}^T\subset \mathscr{L}^1_{\s}$ is an optimal trajectory realizing $\mathbf{W}_\tau(\eta)$, and that strong duality holds, i.e., $\mathop{\rm val}\eqref{eq:auxp}=-\mathbf{W}_\tau(\eta)$.
		Since  $\{x_t\}_{t=\tau}^T$ is an optimal solution for \eqref{eq:auxp} with $E x_\tau=\xi$, we have $\mathop{\rm val}\eqref{eq:auxp} = \mathbf{V}_\tau(\xi) -\xi\cdot\eta$. Combining this with the strong duality result $\mathop{\rm val}\eqref{eq:auxp}=-\mathbf{W}_\tau(\eta)$, we obtain the Fenchel-Young equality $\mathbf{V}_\tau(\xi) + \mathbf{W}_\tau(\eta) = \xi \cdot \eta$.  This implies $\eta\in\partial\mathbf{V}_\tau(\xi)$.
		
		The result extends to any starting time $\s\in [\![\tau:T-1]\!]$ because the restricted trajectory $\{(x_t,p_t)\}_{t=\s}^T$ is itself a Hamiltonian trajectory for the problem on $[\![\s:T]\!]$, allowing the same proof to be applied.
	\end{proof}
	
	The second result is the converse of the Theorem~\ref{thm:charact_thrm_1}. It provides a complete characterization of the subdifferential of the value function, since it says that for any subgradient of the value function there is a discrete-time Hamiltonian trajectory that  allows to recover that subgradient.
	\begin{teo}\label{thm:charact_thrm_2}
		
		Let $\xi \in \bRn$ and $\eta \in \partial \mathbf{V}_{\tau}(\xi)$. Suppose $x\in \mathscr{N}_{\tau}$ is a minimizer of the problem \eqref{primalvaluefunction}.  Then, if the hypotheses \ref{h1}-\ref{h3} are in force, the converse of the Theorem \ref{thm:charact_thrm_1} holds, i.e., there exists a discrete-time Hamiltonian trajectory $\{(x_t,p_t)\}_{t=\tau}^T \in \mathscr{N}_{\tau} \times \mathscr{P}_{\tau}$ that satisfies $(\esp{x_\tau},\esp{p_\tau})=(\xi,-\eta)$ and the transversality condition $-p_T\in\partial g\left(\esp{x_T}\right)$ is satisfied.
	\end{teo}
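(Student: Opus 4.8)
The plan is to take the primal minimizer $x$ and pair it with a dual minimizer obtained from strong duality, and then to detect which of the inequalities in the proof of Proposition~\ref{prop:weak} are saturated. First, since $\eta\in\partial\mathbf{V}_\tau(\xi)$, the Fenchel--Young equality \eqref{eq:FYeq} gives $\mathbf{V}_\tau(\xi)+\mathbf{V}_\tau^*(\eta)=\xi\cdot\eta$; in particular $\xi\in\dom(\mathbf{V}_\tau)$ and $\eta\in\dom(\mathbf{V}_\tau^*)$, and by Proposition~\ref{prop:weak} both values are finite. Because $x$ is a minimizer of \eqref{primalvaluefunction} and Assumptions~\ref{h1}--\ref{h3} hold, Proposition~\ref{prop:conjugacy_1} applies: $\mathbf{V}_\tau^*=\mathbf{W}_\tau$ and the infimum defining $\mathbf{W}_\tau(\eta)$ is attained, say by $p=\{p_t\}_{t=\tau}^T\in\mathscr{P}_\tau$ with $\esp{p_\tau}=-\eta$. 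Hence $\{(x_t,p_t)\}_{t=\tau}^T\subset\mathscr{N}_\tau\times\mathscr{P}_\tau$ is a primal--dual optimal pair satisfying the strong duality identity $\mathbf{V}_\tau(\xi)+\mathbf{W}_\tau(\eta)=\xi\cdot\eta$, and $(\esp{x_\tau},\esp{p_\tau})=(\xi,-\eta)$ by feasibility.

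Second, I would run the computation of Proposition~\ref{prop:weak} with these specific $x$ and $p$. Each step there is a genuine inequality: for every $t\in[\![\tau+1:T]\!]$ and a.s.\ on $\omega\in\Omega$, one has $L_t(\omega,x_{t-1},\Delta x_t)+M_t(\omega,\espc{t}{p_t},\espc{t}{\Delta p_t})\ge x_{t-1}\cdot\espc{t}{\Delta p_t}+\Delta x_t\cdot\espc{t}{p_t}$ by the definition of the conjugate, and $g(\esp{x_T})+f(p_T)\ge -p_T\cdot\esp{x_T}$ since $f(b)=g^*(-b)$. Summing over $t$, taking expectations, using the conditional-expectation manipulations of that proof together with the constancy of $\espc{\tau}{p_\tau}$ and $p_T$ built into $\mathscr{P}_\tau$, one arrives exactly at $\mathbf{V}_\tau(\xi)+\mathbf{W}_\tau(\eta)\ge\xi\cdot\eta$. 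Here this is an equality. Since every summand on the ``$L_t+M_t-\text{bilinear}$'' side is nonnegative a.s., and all the integrals in sight are finite, the aggregate equality forces each such summand to vanish a.s.\ and forces $g(\esp{x_T})+f(p_T)=-p_T\cdot\esp{x_T}$.

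Third, I would translate these saturation conditions. The terminal equality $g(\esp{x_T})+g^*(-p_T)=(-p_T)\cdot\esp{x_T}$ is, by \eqref{eq:FYeq}, exactly the transversality condition $-p_T\in\partial g(\esp{x_T})$. For each $t\in[\![\tau+1:T]\!]$, the pointwise equality $L_t(\omega,x_{t-1},\Delta x_t)+M_t(\omega,\espc{t}{p_t},\espc{t}{\Delta p_t})=x_{t-1}\cdot\espc{t}{\Delta p_t}+\Delta x_t\cdot\espc{t}{p_t}$ is the Fenchel--Young equality for the conjugate pair $L_t(\omega,\cdot,\cdot)$, $M_t(\omega,\cdot,\cdot)$ under the pairing $\langle(x,v),(q,p)\rangle=x\cdot q+v\cdot p$, hence it is equivalent to the discrete-time Euler--Lagrange relation $(\espc{t}{\Delta p_t},\espc{t}{p_t})\in\partial L_t(\omega,x_{t-1},\Delta x_t)$, i.e.\ \eqref{eq:dEUr}; and, exactly as noted in the proof of Theorem~\ref{thm:charact_thrm_1} via \cite[Theorem~37.5]{RocBook70}, \eqref{eq:dEUr} is equivalent a.s.\ to the Hamiltonian inclusion \eqref{eq:dHt}. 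Thus $\{(x_t,p_t)\}_{t=\tau}^T$ is a discrete-time Hamiltonian trajectory on $[\![\tau:T]\!]$ with $(\esp{x_\tau},\esp{p_\tau})=(\xi,-\eta)$ satisfying \eqref{eq:transversality}, which is the assertion. (Alternatively, one may obtain \eqref{eq:dEUr} and the transversality condition in one stroke by invoking \cite[Theorems~4 and~6]{RocWet83} for the auxiliary problem \eqref{eq:auxp}, for which $x$ is optimal and $p$ is the associated dual optimum supplied by Proposition~\ref{prop:conjugacy_1}.)

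The main obstacle is the finiteness bookkeeping in the second step: to pass from ``the expectation of a nonnegative sum is zero'' to ``each summand is zero a.s.'' one must rule out any $\infty-\infty$ cancellation. This uses that $\mathbf{V}_\tau(\xi)$ and $\mathbf{W}_\tau(\eta)$ are both real; that $\esp{\sum_t L_t(\omega,x_{t-1},\Delta x_t)}>-\infty$ by the integrability hypothesis \eqref{hyp:integrability}; that $\esp{\sum_t M_t(\omega,\espc{t}{p_t},\espc{t}{\Delta p_t})}>-\infty$, obtained by bounding each $M_t$ from below through the strictly feasible trajectory $\bar x$ and the summable $\alpha_t$ of Assumption~\ref{h2}; and that the bilinear terms $\esp{x_{t-1}\cdot\espc{t}{\Delta p_t}}$ and $\esp{\Delta x_t\cdot\espc{t}{p_t}}$ are finite because $x\in\mathscr{L}^\infty_\tau$ and $p\in\mathscr{L}^1_\tau$ are paired through the $L^\infty$--$L^1$ duality (and, since $x_{t-1}$ is $\mathscr{G}_t$-measurable, the conditional expectations may be removed). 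Once these finiteness facts are in place, the complementary-slackness argument closes the proof.
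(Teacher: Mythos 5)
Your proposal is correct, and it reaches the conclusion by a route that differs from the paper's in its second half. The opening is identical: both you and the authors use $\eta\in\partial\mathbf{V}_\tau(\xi)$, the identity $\mathbf{V}_\tau^*=\mathbf{W}_\tau$ from Proposition~\ref{prop:conjugacy_1}, and the attainment statement of that proposition to produce a dual optimal trajectory $\{p_t\}_{t=\tau}^T$ with $\esp{p_\tau}=-\eta$, arriving at the Fenchel--Young equality $\mathbf{V}_\tau(\xi)+\mathbf{W}_\tau(\eta)=\xi\cdot\eta$. Where you diverge is in extracting the Euler--Lagrange relation \eqref{eq:dEUr} and the transversality condition \eqref{eq:tc}: the paper obtains them in one stroke by invoking the optimality conditions of \cite[Thm.~4]{RocWet83} for the auxiliary problem \eqref{eq:auxp}, using that $\{p_t\}$ lies in $\partial\Phi(0)$ for the perturbation function \eqref{eq:perturbation}, whereas you rerun the weak-duality chain of Proposition~\ref{prop:weak} for the specific optimal pair and argue by complementary slackness that every Fenchel inequality in that chain must be saturated. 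Your route is more self-contained (it never touches the perturbation function and replaces the external citation by an elementary argument internal to the paper), at the price of the finiteness bookkeeping needed to pass from a vanishing expectation of a nonnegative sum to a.s.\ vanishing of each summand --- which you handle correctly: hypothesis \eqref{hyp:integrability} and properness give $\esp{\sum_t L_t}>-\infty$, the strictly feasible $\bar x$ of Assumption~\ref{h2} gives an integrable minorant for each $M_t$, and the $L^\infty$--$L^1$ pairing makes the bilinear terms finite. The only point worth flagging is that your identification of the pointwise equality $L_t+M_t=x_{t-1}\cdot\espc{t}{\Delta p_t}+\Delta x_t\cdot\espc{t}{p_t}$ with \eqref{eq:dEUr} depends on keeping the ordering of the arguments of $M_t$ consistent with the pairing $\langle(x,v),(q,p)\rangle=x\cdot q+v\cdot p$; you state this correctly, and the final passage from \eqref{eq:dEUr} to the Hamiltonian inclusion \eqref{eq:dHt} via \cite[Theorem~37.5]{RocBook70} coincides with the paper's. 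Your parenthetical alternative at the end is, in fact, precisely the paper's proof.
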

	
	\begin{proof}
		Let $\eta \in \partial \mathbf{V}_{\tau}(\xi)$.  This condition implies that $\xi \in \partial \mathbf{W}_{\tau}(\eta)$, as $\mathbf{V}_{\tau}^{*}(\eta)=\mathbf{W}_{\tau}(\eta)$. Hence, $\eta \in \operatorname{dom}(\mathbf{W}_{\tau})$, and Proposition~\ref{prop:conjugacy_1} applies, providing the existence of an optimal trajectory $\{p_{t}\}_{t=\tau}^{T}\in \mathscr{P}_{\tau}$ such that $\esp{p_{\tau}}=-\eta$, and
		$$
		\mathbf{W}_{\tau}(\eta)=\esp{\sum_{t=\tau+1}^{T}M_{t}(\omega,\espc{t}{p_{t}},\espc{t}{\Delta p_{t}})} + f(p_{T}).
		$$
		From the Fenchel-Young equality we have that $\mathbf{V}_{\tau}(\xi)+\mathbf{W}_{\tau}(\eta)=\xi\cdot \eta$. Thus $\{x_{t}\}_{t=\tau}^{T}$ is an optimal solution for \eqref{eq:auxp}. 
		
		We have an optimal trajectory $\{x_t\}_{t=\tau}^T$ and an optimal dual trajectory $\{p_t\}_{t=\tau}^T$ (which, as shown in the proof of Prop.~\ref{prop:conjugacy_1}, satisfies $\{p_{t}\}_{t=\tau}^{T}\in \partial\Phi(0)$).  By \cite[Thm.4]{RocWet83}, this primal-dual pair must satisfy the discrete-time Euler-Lagrange relation \eqref{eq:dEUr} and the transversality condition \eqref{eq:tc}.  As established in the proof of Theorem~\ref{thm:charact_thrm_1}, the Euler-Lagrange relation is equivalent to the Hamiltonian inclusion \eqref{eq:dHt}.  Therefore, $\{(x_t,p_t)\}_{t=\tau}^T$ is a discrete-time Hamiltonian trajectory satisfying the required transversality condition.
	\end{proof}
	
	\section{Applications to linear-convex problems}\label{sec:L-Cproblems}
	
	In this section, we demonstrate how our main theory applies to the important class of linear-convex (LC) optimal control problems. Our objective is to show that the LC problem can be reformulated as a convex stochastic Bolza problem of the form \eqref{primalvaluefunction}, and establish a version of the method of characteristics for this family.
	
	Let $\ell_{t}:\bRn\times \mathbb{R}^{m}$ be convex functions for each $t=\tau,\ldots,T-1$.  We consider a disturbance process (or noise) $\{w_{t}\}_{t=\tau}^{T-1}$, where the $w_t$ are i.i.d. with $\esp{w_t}$ = 0, $\esp{w_tw_t^\top}=W$.  Let $\gamma$ be a random variable, independent of all $w_{t}$, which we use to define the initial information $w_{\tau-1}=\gamma$.  Furthermore, let $U_{t}\subset \mathbb{R}^{m}$ and $X_{t}\subset \bRn$ be closed convex sets, with the initial state set $X_{\tau}$ also assumed to be bounded.  Given an initial expected state $\xi \in \bRn$ and $\tau\leq T-1$, we consider the state process $x=(x_{\tau},x_{\tau+1},\ldots,x_{T})$ and the control process $u=(u_{\tau},\ldots,u_{T-1})$. The LC problem with mixed constraints is formulated as:
	\begin{equation}\tag{LC}\label{example:LCMC}
		\begin{cases}
			\text{ Minimize }&\esp{ \displaystyle\sum_{t=\tau}^{T-1}\ell_t(x_t,u_t)}+g\left(\esp{x_T}\right),\\
			\text{ over all }& \{x_t\}_{t=\tau}^T\in \mathscr{N}_{\tau}, ~\{u_{t}\}_{t=\tau}^{T-1}\in \mathscr{M}_{\tau},\\ 
			\text{ subject to } & x_{t+1}=Ax_{t}+Bu_{t}+w_{t}\\
			&f_{t}(x_{t},u_{t}) \leq 0 \quad \text{a.s. } \forall t\in[\![\tau:T-1]\!],\\
			&u_{t} \in U_{t}\quad \text{a.s. }\forall t\in[\![\tau:T-1]\!],\\
			&x_{t} \in X_{t}\quad \text{a.s. }\forall t\in[\![\tau:T]\!],\\
			&\esp{x_{\tau}}=\xi, 
		\end{cases}
	\end{equation}
	where the sets $\mathscr{N}_{\tau},\mathscr{M}_{\tau}$ are given by:
	\begin{align*}
		\mathscr{N}_{\tau}&:=\{(x_{\tau},\ldots,x_{T})\in \mathscr{L}^{\infty}(\Omega,\mathscr{A},\mu;\bRn)^{T-\tau+1}\mid x_{t} \text{ is } \mathscr{G}_{t}\text{-measurable for } t \in [\![\tau:T]\!]\},\\
		\mathscr{M}_{\tau}&:=\{(u_{\tau},\ldots,u_{T-1}) \in \mathscr{L}^{\infty}(\Omega,\mathscr{A},\mu;\mathbb{R}^{m})^{T-\tau} \mid u_{t}\text{ is } \mathscr{G}_{t}\text{-measurable for } t \in [\![\tau:T-1]\!]\},
	\end{align*}
	where $w_{\tau-1}=\gamma$, and $\mathscr{G}_t := \sigma(w_{\tau-1},\ldots,w_{t-1})$ is the $\sigma$-field generated by the noise up to time $t-1$. This filtration $\mathscr{G}_t$ represents the information available when the control $u_t$ is chosen. The full $\sigma$-field is $\mathscr{A}=\mathscr{G}_T = \sigma(w_{\tau-1},\ldots, w_{T-1})$.
	
	Our strategy is to first reformulate Problem~\eqref{example:LCMC} as an equivalent stochastic Bolza problem (stated in Lemma~\ref{lem:LC2Bolza}). We then apply the main theory of this paper (Theorem~\ref{thm:charact_thrm_1} and Lemma~\ref{lem:Char2Bolza}) to this Bolza formulation to obtain the main result of this section. This two-step process is illustrated in Figure~\ref{fig:lc_roadmap}.
	
	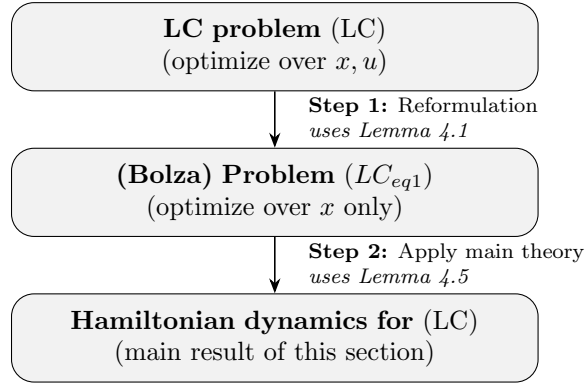
\begin{figure}[h!]
		\centering
		\begin{tikzpicture}[
			>=Stealth,
			node distance=0.75cm,
			box/.style={
				draw,
				rounded corners=3mm,
				fill=gray!10,
				text width=6.2cm,
				minimum height=0.95cm,
				inner xsep=4mm,
				inner ysep=2mm,
				align=center,
				font=\footnotesize
			},
			arr/.style={-Stealth, semithick},
			lab/.style={font=\scriptsize, align=left}
			]
			
			\node (lc) [box] {\textbf{LC problem \eqref{example:LCMC}}\\(optimize over $x,u$)};
			\node (bolza) [box, below=of lc] {\textbf{(Bolza) Problem \eqref{example:LCMCeq3}}\\(optimize over $x$ only)};
			\node (ham) [box, below=of bolza] {\textbf{Hamiltonian dynamics for \eqref{example:LCMC}}\\(main result of this section)};
			
			\draw[arr] (lc) -- (bolza)
			node[lab, right=3mm, midway] {\textbf{Step 1:} Reformulation\\\emph{uses Lemma~\ref{lem:LC2Bolza}}};
			\draw[arr] (bolza) -- (ham)
			node[lab, right=3mm, midway] {\textbf{Step 2:} Apply main theory\\\emph{uses Lemma~\ref{lem:Char2Bolza}}};
			
		\end{tikzpicture}
		\caption{Roadmap for the reformulation of the \eqref{example:LCMC} problem, and the resulting method of characteristics.}
		\label{fig:lc_roadmap}
	\end{figure}
	
	The LC problem involves optimizing over both the state $x$ and the control $u$. Our strategy is to "minimize out" the control variable $u$ to obtain a new problem that depends only on the state $x$.  We achieve this by absorbing all constraints and the control-dependent costs into a single function, $\Phi_t$, and then defining a new Bolza Lagrangian, $L_t$, as the infimum of $\Phi_t$ with respect to $u$.
	
	For every $t \in [\![\tau+1:T]\!]$, let us define the function $\Phi_{t}$ by:
	$$\Phi_{t}(\omega,x,v,u)=\ell_{t-1}(x,u) + \delta_{C_{t-1}}(x,v,u) + \delta_{D_{t-1}}(x,u) + \delta_{U_{t-1}}(u)+\delta_{X_{t-1}}(x),$$
	where $v = \Delta x_t$ and the constraint sets are:
	\begin{align*}
		C_{t-1}&:=\{(x,v,u)\mid v=Ax-x+Bu+w_{t-1}\}\\
		D_{t-1}&:=\{(x,u)\mid f_{t-1}(x,u)\leq 0 \}.
	\end{align*}
	The original LC problem \eqref{example:LCMC} is then equivalent to finding $\mathbf{V}_{\tau}(\xi)$, the value function associated to the following problem:
	
	\begin{equation}\tag{$LC_{eq}$}\label{example:LCMCeq2}
		\begin{split}
			\min &\, \esp{\sum_{t=\tau+1}^{T}\Phi_{t}(\omega,x_{t-1},\Delta x_{t},u_{t-1})}+g\left(\esp{x_T}\right),\\
			\mbox{over all } &\{x_t\}_{t=\tau}^T\in \mathscr{N}_{\tau},~\{u_{t}\}_{t=\tau}^{T-1}\in \mathscr{M}_{\tau}\text{ such that } \esp{x_{\tau}}=\xi 
		\end{split}
	\end{equation}
	
	We can rewrite this value function by separating the infima:
	$$\mathbf{V}_{\tau}(\xi)=\inf_{\substack{x\in\mathscr{N}_{\tau}\\\esp{x_\tau}=\xi}}\inf_{u\in \mathscr{M}_{\tau}}\left\{\esp{\sum_{t=\tau+1}^{T}\Phi_{t}(\omega,x_{t-1},\Delta x_{t},u_{t-1})}+g\left(\esp{x_T}\right)\right\}$$
	
	Our goal is to interchange the expectation and the infimum over $u$. If this is valid, we can define a new Lagrangian $L_t$ that only depends on $x$: 
	
	\begin{equation}\tag{9}\label{lagrangiandef:LCMCinfcase}
		L_{t}(\omega,x,v) := \inf_{u \in \mathbb{R}^{m}} \Phi_{t}(\omega,x,v,u)
	\end{equation}
	
	This would reduce the problem to the desired stochastic Bolza form: \begin{equation}\tag{$LC_{eq1}$}\label{example:LCMCeq3}
		\mathbf{V}_{\tau}(\xi)= \inf_{\substack{x\in\mathscr{N}_{\tau}\\\esp{x_\tau}=\xi}}\left\{\esp{\sum_{t=\tau+1}^{T}L_{t}(\omega,x_{t-1},\Delta x_{t})}+g\left(\esp{x_T}\right)\right\}
	\end{equation}
	
	\begin{lemma}{(Equivalence of Problems)}\label{lem:LC2Bolza}
		For any fixed $x\in \mathscr{N}_{\tau}$ with $\esp{x_{\tau}}=\xi$, and $t\in[\![\tau+1:T]\!]$, the function $\varphi_{t}(\omega,u) := \Phi_{t}(\omega,x_{t-1},\Delta x_{t},u)$ is a normal integrand. Consequently, the interchange of infimum and expectation is justified, and the value function $\mathbf{V}_\tau(\xi)$ of the \eqref{example:LCMC}~problem is equal to that of the stochastic Bolza problem~\eqref{example:LCMCeq3}
	\end{lemma}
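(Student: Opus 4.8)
The plan is to handle the two halves of the statement in turn: normal-integrand calculus for the measurability claim, and the interchange-of-minimization theorem for the value-function identity. Fix $x\in\mathscr{N}_\tau$ with $\esp{x_\tau}=\xi$ and $t\in[\![\tau+1:T]\!]$, and write $\varphi_t(\omega,u)$ as the sum of the five pieces of $\Phi_t$ evaluated at $(x_{t-1}(\omega),\Delta x_t(\omega),u)$. Each piece is a normal integrand: $(\omega,u)\mapsto\ell_{t-1}(x_{t-1}(\omega),u)$ is Carathéodory (measurable in $\omega$ since $x_{t-1}$ is, continuous in $u$ since a finite convex function on $\bR^m$ is continuous); $\delta_{U_{t-1}}$ is $\omega$-free and l.s.c.; $\delta_{X_{t-1}}(x_{t-1}(\omega))$ is measurable in $\omega$ and constant in $u$; and $\delta_{D_{t-1}}(x_{t-1}(\omega),u)$ and $\delta_{C_{t-1}}(x_{t-1}(\omega),\Delta x_t(\omega),u)=\delta_{\{0\}}(x_t(\omega)-Ax_{t-1}(\omega)-Bu-w_{t-1}(\omega))$ are indicators of, respectively, the $0$-sublevel set and the zero set of Carathéodory integrands in $(\omega,u)$. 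A finite sum of normal integrands, all but one of which ($\ell_{t-1}$, finite-valued, so no $\infty-\infty$ arises) are nonnegative, is again a normal integrand; the same bookkeeping with $u$ kept free shows $\Phi_t$ is a convex normal integrand in $(\omega,(x,v,u))$, whence $L_t=\inf_u\Phi_t$ from \eqref{lagrangiandef:LCMCinfcase} is a convex normal integrand by inf-projection (\cite{RockafellarWets1998}) --- precisely the regularity needed to place \eqref{example:LCMCeq3} inside the framework of Section~\ref{sec:duality}.

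For the value-function identity, absorbing the dynamics, the mixed constraint $f_{t-1}\le 0$ and the set constraints into $\Phi_t$ turns \eqref{example:LCMC} into \eqref{example:LCMCeq2} verbatim, so the content is the passage from \eqref{example:LCMCeq2} to \eqref{example:LCMCeq3}. The objective of \eqref{example:LCMCeq2} decouples into a sum whose $t$-th summand involves the control only through $u_{t-1}$, so it suffices, for each fixed $x\in\mathscr{N}_\tau$ and each $t$, to show
\[
\inf_{u_{t-1}}\esp{\varphi_t(\omega,u_{t-1})}=\esp{\inf_{u\in\bR^m}\varphi_t(\omega,u)}=\esp{L_t(\omega,x_{t-1},\Delta x_t)},
\]
the outer infimum being over the admissible (essentially bounded, $\mathscr{G}_{t-1}$-measurable) controls. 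The inequality $\ge$ is trivial; for $\le$ I would apply the interchange theorem \cite[Thm.~14.60]{RockafellarWets1998} to the normal integrand $\varphi_t$, which --- when the left-hand side is finite --- produces a measurable (approximate) selection of the pointwise minimizers realizing the right-hand side, after which one checks that this selection can be chosen $\mathscr{G}_{t-1}$-measurable and essentially bounded. Summing the per-stage identity over $t$, adding $g(\esp{x_T})$, and taking the infimum over $x\in\mathscr{N}_\tau$ with $\esp{x_\tau}=\xi$ then yields the equality of the two value functions.

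The main obstacle --- the genuine departure from the deterministic argument of \cite{deride2024subgradientevolution} --- is the non-anticipativity of the control: the minimizing selection must be $\mathscr{G}_{t-1}$-measurable, while $\varphi_t$ carries the $\mathscr{G}_t$-measurable data $\Delta x_t$ and $w_{t-1}$. The mechanism I would rely on is that on the effective domain of $\varphi_t(\omega,\cdot)$ the set $C_{t-1}$ pins $Bu$ to $x_t(\omega)-Ax_{t-1}(\omega)-w_{t-1}(\omega)$, so finiteness of the left-hand infimum already forces $x_t-w_{t-1}$ --- hence the whole integrand $\varphi_t$ --- to agree a.s.\ with a $\mathscr{G}_{t-1}$-measurable object; \cite[Thm.~14.60]{RockafellarWets1998} then applies over the decomposable space $L^\infty(\Omega,\mathscr{G}_{t-1},\mu;\bR^m)$ and returns the pointwise infimum, and boundedness of the selection is routine since $x_{t-1},\Delta x_t,w_{t-1}$ are essentially bounded and $C_{t-1}$ confines $u$ to a bounded affine slice. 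A companion point, needed so that equality of the value functions holds (and not just one inequality), is that trajectories $x\in\mathscr{N}_\tau$ not realizable by an adapted control cannot lower the infimum in \eqref{example:LCMCeq3}; I would establish this by a convexity argument --- averaging a possibly non-adapted minimizing control via $\espc{t-1}{\cdot}$ and using conditional Jensen on the convex $\ell_{t-1}$ together with the affine dynamics --- and I expect reconciling this modification with the state constraints $x_t\in X_t$ to be the finicky part.
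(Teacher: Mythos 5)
Your handling of the normal-integrand claim is essentially the paper's: the paper checks lower semicontinuity of $\varphi_t(\omega,\cdot)$ termwise and then verifies measurability of $\omega\mapsto\operatorname{epi}(\varphi_t(\omega,\cdot))$ by writing it as an intersection of three measurable maps (the epigraph of the Carath\'eodory integrand $(\omega,u)\mapsto\ell_{t-1}(x_{t-1}(\omega),u)$, the constant map $U_{t-1}\times\bR$, and the level set of the Carath\'eodory maps encoding the dynamics and the mixed constraint, via \cite[Thm.~14.36]{RockafellarWets1998}); your sum-of-normal-integrands bookkeeping is the same computation in a different order. For the second half the paper does exactly what you do first, namely invoke \cite[Thm.~14.60]{RockafellarWets1998}, and then stops.

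The point at which you go beyond the paper --- the $\mathscr{G}_{t-1}$-measurability of the minimizing selection --- is a genuine issue, and it is also where your proposal is not yet a proof. Theorem~14.60 interchanges infimum and expectation over a decomposable space of selections measurable with respect to the $\sigma$-field carried by the integrand, which here is $\mathscr{G}_{t}$ (since $\varphi_t$ involves $\Delta x_t$ and $w_{t-1}$), not the $\mathscr{G}_{t-1}$ required by $\mathscr{M}_\tau$. Your mechanism for realizable trajectories (finiteness over adapted controls forces $x_t-Ax_{t-1}-w_{t-1}=Bu_{t-1}$ a.s.\ for some $\mathscr{G}_{t-1}$-measurable $u_{t-1}$, so $\varphi_t$ agrees a.s.\ with a $\mathscr{G}_{t-1}$-normal integrand) is sound. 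But the equality of the two value functions then hinges entirely on your ``companion point'': an adapted state trajectory $x\in\mathscr{N}_\tau$ that is \emph{not} realizable by an adapted control is feasible for \eqref{example:LCMCeq3} with finite cost (take $A=0$, $B=I_n$ and $x_t=2w_{t-1}$, so the pointwise minimizer is $u=w_{t-1}\notin\mathscr{G}_{t-1}$) while being infeasible for \eqref{example:LCMC}, so one must show such trajectories never lower the infimum of \eqref{example:LCMCeq3}. Your proposed fix --- replacing the non-adapted control by $\espc{t-1}{\cdot}$ and invoking conditional Jensen --- alters the state from stage $t$ onward, and compatibility of the averaged trajectory with $x_t\in X_t$ and $f_t(x_t,u_t)\le 0$ is asserted rather than established; as you yourself note, this is the finicky part, and it is left open. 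To be clear, the paper's own proof does not address this step at all (it ends with the citation of Thm.~14.60), so you have isolated a real lacuna in the argument rather than taken a wrong turn --- but as written your proposal does not close it either.
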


	\begin{proof}
		Let $t\in[\![\tau+1:T]\!]$, and let $\omega \in \Omega$.  Assume that $\operatorname{dom}(\varphi_{t}(\omega,\cdot))\neq \emptyset$, as lower semicontinuity is trivial otherwise. The function $\varphi_t(\omega,\cdot)$ is l.s.c. as it is the sum of several components, each being l.s.c.: $u \rightarrow \ell_{t-1}(x,u)$ is convex and finite-valued for any fixed $x$, thus l.s.c. for any fixed $x$, which implies that  $u \rightarrow \ell_{t-1}(x_{t-1},u)$ is also l.s.c.; the sets $D_{t-1}$, $C_{t-1}$ and $U_{t-1}$ are closed by definition, so their indicator functions ($\delta_{D_{t-1}}$, $\delta_{C_{t-1}}$, and $\delta_{U_{t-1}}$) are also l.s.c.  We conclude by algebra of l.s.c. that $\varphi_t(\omega,\cdot)$ is l.s.c.
		
		We now focus on measurability of the set-valued map $\omega\mapsto \operatorname{epi}(\varphi_t(\omega,\cdot))$. Note that, for each $\omega$ its epipragh is the intersection of three set-valued maps:
		$$
		\operatorname{epi}(\varphi_{t}(\omega,\cdot)) = \underbrace{\operatorname{epi}(\ell_{t-1}(x_{t-1},\cdot))}_{(i)} \cap \underbrace{(U_{t-1} \times \mathbb{R})}_{(ii)} \cap \underbrace{(V_{t} \times \mathbb{R})}_{(iii)},
		$$
		where, for each $\omega$, the set $V_{t}$ is defined by:
		\begin{align*}
			V_{t} &= \left\{ u \in \mathbb{R}^{m} \mid \Delta x_{t} = A x_{t-1} - x_{t-1} + B u + w_{t-1}\,\mbox{and}\, f_{t-1}(x_{t-1},u)\leq 0\right\}.
		\end{align*}
		
		Map $(ii)$ is constant and, thus, measurable.  We need to prove the measurability of maps $(i)$ and $(iii)$ with respect to $\sigma(\mathbf{W}_{\tau-1},\ldots,\mathbf{W}_{t-2})$:
		\begin{itemize}
			\item[$(i)$] Since $x \in \mathscr{N}_{\tau}$, we have in particular that $x_{t-1} \in \sigma(w_{\tau-1}, \ldots, w_{t-2})$, which, along with the continuity of the function $(x,u) \rightarrow \ell_{t-1}(x,u)$, allows us to conclude that the function $(\omega,u) \rightarrow \ell_{t-1}(x_{t-1},u)$ is a Carathéodory function. In particular, it is a normal integrand, which implies that its epigraphical map $\omega\mapsto \operatorname{epi}(\ell_{t-1}(x_{t-1},\cdot))$ is $\sigma(w_{\tau-1}, \ldots, w_{t-1})$-measurable.
			\item[$(iii)$] We can write $E_t=\left\{u\,\mid\,h_t(\omega,u)=0\,\mbox{and}\,m_t(\omega,u)\leq 0\right\}$, where
			\begin{align*}
				h_t(\omega,u)&:= A x_{t-1} - x_{t-1} + B u + w_{t-1} - \Delta x_{t}\\
				m_t(\omega,u)&:= f_{t-1}(x_{t-1},u)
			\end{align*}
			Both, $h_t$ and $m_t$ are Carathéodory functions due to their continuity in $u$-variable (by assumption on $f_{t-1}$ and measurable in $\omega$ (since $x_{t-1}$, $\Delta x_t$, and $\mathbf{W}_{t-1}$ are all $\sigma(\mathbf{W}_{\tau-1},\ldots,\mathbf{W}_{t-1})$-measurable.  By \cite[Thm.14.36]{RockafellarWets1998}, a set-valued map defined by level sets of Carathéodory functions is measurable.  Thus, $\omega\mapsto E_t$ is measurable. 
		\end{itemize}
		
		Since the maps $(i)$, $(ii)$, and $(iii)$ are all measurable, their intersection $\omega\mapsto \operatorname{epi}(\varphi_t(\omega,\cdot))$ is also measurable.  This property, combined with the l.s.c. established previously, provides that $\varphi_t$ is a normal integrand. The interchangeability between infimum and expectation follows by virtue of \cite[Thm.14.60]{RockafellarWets1998}.
	\end{proof}
	
	A remaining difficulty is that an optimal solution $\bar{x}$ for the Bolza problem \eqref{example:LCMCeq3} may not correspond to an optimal pair $(\bar{x}, \bar{u})$ for the original problem. We need to ensure that the infimum in the definition of $L_t$ \eqref{lagrangiandef:LCMCinfcase} is attained by some measurable $\bar{u}$. We also need to ensure the Bolza problem \eqref{example:LCMCeq3} itself has a solution. The following assumptions provide sufficient conditions for both.
	
	\begin{assumption} \label{hyp:USCcondition}
		For all $t\in [\![\tau+1:T-1]\!]$, there exists an upper semicontinuous function $\psi_t: \bRn \rightarrow \mathbb{R}$ such that $\sup _{u \in \mathbf{U}}\left\{|u| \mid f_t(x, u) \leq 0\right\} \leq \psi_t(x)$, for all $x$ in $\bRn$.
	\end{assumption}
	
	\begin{assumption} \label{assumptionAL1:LCMC} There exists a $x\in \mathscr{N}_{\tau}$ such that $L_{t}(\omega,x_{t-1},\Delta x_{t})<+\infty$, $\forall t \in [\![\tau+1:T-1]\!]$ a.s., and $\esp{x_{\tau}}=\xi$
	\end{assumption}
	
	\begin{assumption} \label{assumptionBL1:LCMC} There exists $c_{1},c_{2}\in \mathbb{R}$ and a proper univariate non-decreasing coercive single-valued function  $\theta$\footnote{Meaning that $\theta$ is bounded from below with $\lim_{s\rightarrow+\infty}\frac{\theta(s)}{s}=+\infty$}  such that for every $\omega \in \Omega$: $L_{t}(\omega,x,v)\geq \theta(\max\{0,|v|-c_{1}|x|\})-c_{2}|x|$, for all $(x,v)\in \bRn\times\bRn$.
	\end{assumption}

	\begin{lemma}{(Existence of Optimal Solutions)}\label{lem:Char2Bolza}
		Let assumptions \ref{hyp:USCcondition}, \ref{assumptionAL1:LCMC}, and \ref{assumptionBL1:LCMC} be satisfied. Then:
		\begin{enumerate}
			\item\label{lem:c2b1} (Properties of $L_t$) The infimum in the definition of $L_t(\omega, x, v)$ is attained. Furthermore, $L_t$ is a convex, proper, and l.s.c. function.
			\item\label{lem:c2b2} (Existence for Bolza) The stochastic Bolza problem~\eqref{example:LCMCeq3} admits an optimal solution $\bar{x} \in \mathscr{N}_\tau$.
			\item\label{lem:c2b3} (Existence for~\eqref{example:LCMC} There exists a corresponding $\bar{u} \in \mathscr{M}_\tau$ (given by a measurable selection) such that the pair $(\bar{x}, \bar{u})$ is an optimal solution to the original \eqref{example:LCMC}~problem
		\end{enumerate}    
	\end{lemma}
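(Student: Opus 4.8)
The plan is to prove the three items in the stated order, since item~\ref{lem:c2b1} produces the Lagrangians $L_t$ that the later items refer to. For~\ref{lem:c2b1}, fix $\omega$ and note that $\Phi_t(\omega,\cdot,\cdot,\cdot)$ is jointly convex --- a sum of the convex $\ell_{t-1}$ and indicators of the convex sets $C_{t-1}$ (affine), $D_{t-1}$ (a sublevel set of the convex $f_{t-1}$), $U_{t-1}$, $X_{t-1}$ --- and jointly l.s.c.\ by Lemma~\ref{lem:LC2Bolza} plus the algebra of l.s.c.\ functions. The key is that Assumption~\ref{hyp:USCcondition} confines the $u$-fiber, $\{u:\Phi_t(\omega,x,v,u)<+\infty\}\subseteq\{u\in U_{t-1}:f_{t-1}(x,u)\le0\}\subseteq\{|u|\le\psi_{t-1}(x)\}$, and, $\psi_{t-1}$ being upper semicontinuous, this bound is locally uniform in $x$; hence $u\mapsto\Phi_t(\omega,x,v,u)$ is level-bounded locally uniformly in $(x,v)$. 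By the parametric-minimization/inf-projection theorem (\cite[Thm.~1.17]{RockafellarWets1998}), $L_t(\omega,x,v)=\inf_u\Phi_t(\omega,x,v,u)$ is then l.s.c.\ in $(x,v)$ with the infimum attained whenever finite; convexity is automatic (partial minimization of a jointly convex function), and $L_t$ is again a normal convex integrand. Properness follows from $\dom(L_t)\neq\emptyset$ (Assumption~\ref{assumptionAL1:LCMC}) together with the lower bound in Assumption~\ref{assumptionBL1:LCMC}, which forbids the value $-\infty$.

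For~\ref{lem:c2b2} I would use the direct method, viewing the processes inside $L^1(\Omega,\mathscr{A},\mu;\bRn)^{T-\tau+1}$ with the weak topology. An a priori bound comes from Assumption~\ref{hyp:USCcondition}: a feasible trajectory exists (Assumption~\ref{assumptionAL1:LCMC}), its witness controls are then essentially bounded, so the (i.i.d.) noises $w_t$ lie in $\mathscr{L}^\infty$; an induction on $t$ starting from $|x_\tau|\le\sup_{y\in X_\tau}|y|<+\infty$ (boundedness of $X_\tau$) and using the dynamics $\Delta x_t=(A-I)x_{t-1}+Bu_{t-1}+w_{t-1}$ with $|u_{t-1}|\le\psi_{t-1}(x_{t-1})$ shows every feasible $x$ for~\eqref{example:LCMCeq3} lies a.s.\ in a fixed ball of radius $R$. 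Hence the feasible set is contained in the $L^\infty$-ball of radius $R$, which --- $\mu$ being a probability measure --- is bounded, uniformly integrable and weakly closed, so weakly sequentially compact in $L^1$ (Dunford--Pettis and Eberlein--Šmulian); adaptedness passes to weak limits (each $L^1(\Omega,\mathscr{G}_t,\mu;\bRn)$ is a closed subspace) and $\esp{x_\tau}=\xi$ is a weakly closed constraint. The cost $x\mapsto\esp{\sum_tL_t(\omega,x_{t-1},\Delta x_t)}$ is a convex normal integral functional, bounded below on the $R$-ball by an integrable function (Assumption~\ref{assumptionBL1:LCMC}), hence weakly l.s.c., and $x\mapsto g(\esp{x_T})$ is weakly l.s.c.\ since $x\mapsto\esp{x_T}$ is weakly continuous and $g$ is l.s.c. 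A minimizing sequence then has a weakly convergent subsequence whose limit $\bar x\in\mathscr{N}_\tau$ attains $\mathbf{V}_\tau(\xi)$.

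For~\ref{lem:c2b3} I would combine Lemma~\ref{lem:LC2Bolza} with~\ref{lem:c2b1}. Applying the interchange of Lemma~\ref{lem:LC2Bolza} at $x=\bar x$ gives $\mathbf{V}_\tau(\xi)=\esp{\sum_tL_t(\omega,\bar x_{t-1},\Delta\bar x_t)}+g(\esp{\bar x_T})=\inf_{u\in\mathscr{M}_\tau}\{\esp{\sum_t\Phi_t(\omega,\bar x_{t-1},\Delta\bar x_t,u_{t-1})}+g(\esp{\bar x_T})\}$. By~\ref{lem:c2b1} the pointwise infimum $\inf_u\Phi_t(\omega,\bar x_{t-1},\Delta\bar x_t,u)=L_t(\omega,\bar x_{t-1},\Delta\bar x_t)$ is attained for a.e.\ $\omega$ (finitely, as $\bar x$ is feasible), so the attainment clause of the interchange theorem \cite[Thm.~14.60]{RockafellarWets1998} --- in the $\mathscr{G}_{t-1}$-measurable framework already set up in the proof of Lemma~\ref{lem:LC2Bolza} --- yields an adapted $\bar u\in\mathscr{M}_\tau$ realizing the infimum, with $\Phi_t(\omega,\bar x_{t-1},\Delta\bar x_t,\bar u_{t-1})<+\infty$ a.s.\ (finiteness of the expectation). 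Then the indicator terms of $\Phi_t$ vanish, so $(\bar x,\bar u)$ obeys $\bar x_t=A\bar x_{t-1}+B\bar u_{t-1}+w_{t-1}$, $f_{t-1}(\bar x_{t-1},\bar u_{t-1})\le0$, $\bar u_{t-1}\in U_{t-1}$ and $\bar x_{t-1}\in X_{t-1}$ a.s.; thus $(\bar x,\bar u)$ is feasible for~\eqref{example:LCMC} with cost $\esp{\sum_t\ell_{t-1}(\bar x_{t-1},\bar u_{t-1})}+g(\esp{\bar x_T})=\esp{\sum_tL_t(\omega,\bar x_{t-1},\Delta\bar x_t)}+g(\esp{\bar x_T})=\mathbf{V}_\tau(\xi)$, which by Lemma~\ref{lem:LC2Bolza} is the optimal value of~\eqref{example:LCMC}. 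Hence the pair is optimal.

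The step I expect to be the main obstacle is~\ref{lem:c2b2}: one must simultaneously close the inductive a priori $L^\infty$-bound over the whole horizon (leaning on Assumption~\ref{hyp:USCcondition} and boundedness of $X_\tau$), establish weak lower semicontinuity of the convex integral functional assembled from the $L_t$ of~\ref{lem:c2b1}, and check weak closedness of the adaptedness together with the pathwise constraints carried implicitly inside $\dom(L_t)$ --- none of which can even be stated before~\ref{lem:c2b1} is in place, which is why the order matters. A secondary delicate point is the lower semicontinuity in~\ref{lem:c2b1}, since partial minimization does not preserve l.s.c.\ in general; it is exactly the upper semicontinuity of $\psi_t$ in Assumption~\ref{hyp:USCcondition}, through locally uniform level-boundedness of the $u$-fibers, that rescues it, while the superlinearity of $\theta$ in Assumption~\ref{assumptionBL1:LCMC} both excludes $L_t\equiv-\infty$ and, in a variant argument not relying on the a priori $L^\infty$-bound, supplies uniform integrability of minimizing sequences.
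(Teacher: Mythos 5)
Your proposal is correct, and parts~\ref{lem:c2b1} and~\ref{lem:c2b3} follow essentially the paper's own route (compactness of the $u$-fiber via Assumption~\ref{hyp:USCcondition} for attainment and preservation of convexity/l.s.c.\ under partial minimization; a measurable selection from the argmin of the normal integrand $\Phi_t(\cdot,\bar x_{t-1},\Delta\bar x_t,\cdot)$ for the recovery of $\bar u$). Where you genuinely diverge is part~\ref{lem:c2b2}. The paper relaxes the problem to $L^1$-adapted processes, proves coercivity of the objective by a backward telescoping induction that exploits the superlinearity of $\theta$ in Assumption~\ref{assumptionBL1:LCMC}, establishes uniform integrability of a minimizing sequence through two further inductions feeding into Dunford--Pettis, and only \emph{a posteriori} shows the weak limit is essentially bounded. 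You instead prove an \emph{a priori} uniform $L^\infty$ bound on every finite-cost trajectory: finiteness of $L_t$ forces the existence of a pointwise feasible control, Assumption~\ref{hyp:USCcondition} bounds it by $\psi_{t-1}(x_{t-1})$, upper semicontinuity of $\psi_{t-1}$ makes that bound uniform on the ball furnished by the previous step, and the linear dynamics propagate the bound forward from the bounded set $X_\tau$ --- with the nice extra observation that essential boundedness of the noise (which the paper simply \emph{assumes} mid-proof) is actually a consequence of Assumption~\ref{assumptionAL1:LCMC} by the same mechanism. Once all finite-cost trajectories sit in a fixed $L^\infty$ ball, weak compactness in $L^1$ is immediate and the whole coercivity/uniform-integrability machinery (and in particular the superlinear growth of $\theta$, which you use only for the integrable lower bound) becomes unnecessary; your argument is shorter and proves existence under an effectively weaker growth hypothesis, at the price of leaning more heavily on Assumption~\ref{hyp:USCcondition} and the boundedness of $X_\tau$. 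Both proofs share the same small unexamined point, namely that Assumption~\ref{assumptionAL1:LCMC} only guarantees $L_t<+\infty$ a.s.\ along the witness trajectory, not integrability of the positive part nor $\esp{x_T}\in\dom(g)$, so the finiteness of the optimal value deserves a word in either version.
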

	
	\begin{proof}
		We prove each of the three claims in order
		
		\textit{1. Properties of $L_t$}: For a fixed $\omega \in \Omega$, the function $L_t(\omega, \cdot, \cdot)$ is the partial infimum of the convex, l.s.c. function $\Phi_t$, which is itself l.s.c. as a sum of l.s.c. functions.  Assumption~\eqref{hyp:USCcondition} provides a compact set for the control variable $u$ for any feasible $x$. Since we are minimizing a l.s.c. function ($\Phi_t$) over a compact set, the infimum in the definition of $L_t(\omega, x, v)$ is guaranteed to be attained whenever $L_t < +\infty$.Furthermore, the operations of partial minimization and infimal projection of a convex, proper, l.s.c. function (over a convex set) preserve these properties. Therefore, $L_t(\omega, \cdot, \cdot)$ is also convex, proper, and l.s.c. (see, e.g., \cite[Proposition 4.4]{deride2024subgradientevolution}).
		
		\textit{2. Existence for Bolza}: We prove the existence of an optimal solution $\bar{x}\in\mathscr{N}_\tau$ for the Bolza problem~\eqref{example:LCMCeq3} by the direct method in the calculus of variations.  We first relax the problem to a larger space $L^{1}(\Omega,\mathscr{A},\mu,\mathbb{R}^{T+1-\tau}\times \bRn)$ and, consequently, define the problem on the space of $L^1$-adapted processes $\mathscr{N}_\tau^1$.  Then, we prove the existence of an optimal solution $\bar{x}$ in this space by showing the associated objective functional is coercive and weakly l.s.c.  The proof is completed by showing that this solution actually is essentially bounded, and thus, $\bar{x}\in\mathscr{N}_\tau$, a solution to the original problem.
		
		Consider the infimum of the problem over the space of $L^1$-adapted processes $\mathscr{N}_\tau^1$:
		$$ \mathscr{N}^{1}_{\tau}:=\{x=(x_{\tau},\ldots,x_{T})\in L^{1}(\Omega,\mathscr{A},\mu,\mathbb{R}^{T+1-\tau}\times \bRn)\,\mid\, x_{t+1} \mbox{ is } \mathscr{G}_t-\mbox{measurable}\},
		$$
		defined by the value function
		\begin{equation}\tag{$LC_{L^{1}}$}\label{example:LCMCL1}
			\operatorname{val}(P_{\xi}^{\tau}):=\inf_{\substack{x_{t}\in \mathscr{N}^{1}_{\tau}\\\esp{x_{\tau}}=\xi}}\left\{\esp{\sum_{t=\tau+1}^{T}L_{t}(\omega,x_{t-1},\Delta x_{t})}+g\left(\esp{x_T}\right)\right\}.
		\end{equation}
		Define the objective functional $f_{obj}:L^{1}(\Omega,\mathscr{A},\mu,\mathbb{R}^{T+1}\times \bRn)\rightarrow \overline{\mathbb{R}}$ as:
		$$
		f_{obj}(x)=\esp{\sum_{t=\tau+1}^{T}L_{t}(\omega,x_{t-1},\Delta x_{t})} + g\left(\esp{x_T}\right)+\delta_{\{\xi\}}(\esp{x_{\tau}}) + \delta_{\mathscr{N}^{1}_{\tau}}(x).
		$$
		The existence of an optimal solution $\bar{x}=\operatorname{argmin} f_{obj}(x)$ is guaranteed is $f_{obj}$ is coercive and weakly l.s.c. (w.r.t. $\sigma(L^1,L^\infty)$ topology). 
		
		The functional $f_{obj}$ is weakly l.s.c. because it is convex and strongly l.s.c.:
		
		\textbf{Convexity:} By Part~\ref{lem:c2b1} of this Lemma, $L_t(\omega,\cdot,\cdot)$ is convex.  The convexity unfolds as $f_{obj}$ is the sum of the convex functions integrals of $L_t$, the convex function $g$, and the indicator functions of convex sets.
		
		\textbf{Strong l.s.c.:}Let $x^{n}\rightarrow \bar{x}$ in the $L^{1}$ norm.  By Fatou's Lemma (or properties of integral functionals on l.s.c. integrands) and the l.s.c. of $L_t$ from Part~\ref{lem:c2b1}, we have that:
		\begin{equation}\tag{11}\label{eq11:solLCMCL1}
			\esp{\sum_{t=\tau+1}^{T}L_{t}(\omega,\bar{x}_{t-1},\Delta\bar{x}_{t})}\leq \liminf_{n\rightarrow +\infty}\esp{\sum_{t=\tau+1}^{T}L_{t}(\omega,x^n_{t-1},\Delta x^{n}_{t})}.
		\end{equation}
		
		Furthermore, $L^1$ convergence implies $\esp{x_T}^n\to \esp{\bar{x}_T}$.  Since $g$ is convex and l.s.c. on a finite dimensional space, it is continuous, so $\lim g(\esp{x_T}^n)=g(\esp{\bar{x}_T})$.  The sum of l.s.c. and continuous function is l.s.c., on $f_{obj}$ is strongly l.s.c.

		\item \textbf{Coercivity:}     
		We show that $f_{obj}(x)\to\infty$ as $\|x\|_{L^1}\to\infty$ by backward induction.  The proof applies the coercivity of $\theta$ (from  Assumption~\ref{assumptionBL1:LCMC}) recursively from terminal time $T$ down to $\tau$. Note that the coercivity of $f_{obj}=\esp{\sum_{t=\tau+1}^T L_T}+g(\esp{x_\tau})+\delta_{\{\xi\}}(\esp{x_\tau})+\delta_{\mathscr{N}_\tau^1}(x)$  depends on the coercivity of the integral term $J(x)=\left\{\sum_{t=\tau+1}^T L_T\right\}$, as the rest of the terms are bounded by below.
		
		Let $K_t:=\int_\Omega |x_t|d\mu$.  Our goal is to show that $J(x)$ provides a lower bound for $\sum_{t=\tau+1}^T K_t$.
		
		From Assumption~\ref{assumptionBL1:LCMC}, for any time $t\in[\![\tau+1:T-1]\!]$:
		\begin{enumerate}
			\item $L_t\geq \theta\left(\max\{0,|\Delta x_t|-c_1|x_{t-1}\}\right)-c_2|x_{t-1}|$,
			\item $\theta$ is proper, non-decreasing, and coercive, and
			\item $L_t$ is bounded by below.  Without lost of generality, we can assume that $L_t\geq -c_2|x_{t-1}|$ (otherwise, shift $\theta$).
		\end{enumerate}
		
		Let $a>0$.  By the coercivity of $\theta$, there exists $b$ such that for $z\geq b$ then $\theta(z)\geq a x$.  For $t\in[\![\tau+1:T[\!]$, we take $a_t>0$, and use the corresponding $b_t$ to partition $\Omega$ by the set
		$$
		M_{t}:=\left\{\omega \in \Omega: |\Delta x_{t}|-c_{1}|x_{t-1}|\geq b_{t}\right\}.
		$$
		
		On $M_t$, we apply the coercivity of $\theta$ to obtain:
		\[L_t\geq a\left(|\Delta x_t|-c_1|x_{t-1}|\right)-c_2|x_{t-1}|.\]
		Using the reverse triangular inequality $|\Delta x_t|\geq |x_t|-|x_{t-1}|$.  This becomes:
		\[L_t\geq a|x_{t}|-a|x_{t-1}|-ac_1|x_{t-1}|-c_2|x_{t-1}|=a|x_t|-\left(a(1+c_1)+c_2\right)|x_{t-1}|.\]
		
		On $M_t^c$, we use the lower bound $L_t\geq -c_2|x_{t-1}|$ and note that $|x_t|\leq |x_{t-1}|+|\Delta x_t|<(1+c_1)|x_{t-1}|+b$.
		
		By integrating $L_t$ over $\Omega=M_t\cup M_t^c$ and combining these bounds, we arrive at the key inequality:
		\begin{equation}\label{eq:recineqcoercive}
			\int_\Omega L_t+ab\geq a\int_\Omega|x_t|-(a(1+c_1)+c_2)\int_\Omega|x_{t-1}|.
		\end{equation}
		This can be writen as $J_t+C_t\geq a K_t-a^\prime K_{t-1}$, where $C_t=ab$ and $a^\prime =a(1+c_1)+c_2$.
		
		Let $a_T>0$ be our coefficient for $t=T$.  Let $\varepsilon>0$.  We define a sequence of coefficients $\hat{a}$ recursively backward from $t=T$, as $\hat{a}_T=a_T$, and for $t=T-1,\ldots,\tau$ as $\hat{a}_t=(\hat{a}_{t+1}(1+c_1)+c_2)+\varepsilon$.  Apply inequality~\eqref{eq:recineqcoercive} for each $t$ with its corresponding coefficient $\hat{a}_t$ (which yields the constant $C_t=\hat{a}_tb_t$):
		\begin{align*}
			J_T + C_T &\ge \hat{a}_T K_T - (\hat{a}_T(1+c_1)+c_2) K_{T-1}, \\
			J_{T-1} + C_{T-1} &\ge \hat{a}_{T-1} K_{T-1} - (\hat{a}_{T-1}(1+c_1)+c_2) K_{T-2}, \\
			&\,\,\,\vdots \\
			J_{\tau+1} + C_{\tau+1} &\ge \hat{a}_{\tau+1} K_{\tau+1} - (\hat{a}_{\tau+1}(1+c_1)+c_2) K_{\tau}.
		\end{align*}
		
		Summing these $T-\tau$ inequalities, the terms for $K_{t-1},\ldots,K_{\tau+1}$ form a telescoping sum:
		\[\sum_{t=\tau+1}^T(J_t+C_t)\geq\hat{a}_TK_T+\sum_{t=\tau+1}^{T-1}(\hat{a}_t-(\hat{a}_{t+1}(1+c_1)+c_2))K_t-(\hat{a}_{\tau+1}(1+c_1)+c_2)K_\tau.\]
		Let $C:=\sum_{t=\tau+1}^T C_t$, and $C^\prime:=\hat{a}_{\tau+1}(1+c_1)+c_2$ be finite constants,
		\[\esp{\sum_{t=\tau+1}^TL_t}+C\geq \hat{a}_TK_T+\varepsilon \sum_{t=\tau+1}^{T-1}K_t-C^\prime K_\tau.\]
		Letting $C^{\prime\prime}:=\min(\hat{a}_T,\varepsilon)>0$, the inequality becomes
		\[\esp{\sum_{t=\tau+1}^TL_t}\geq C^{\prime\prime}\sum_{t=\tau+1}^{T}K_t-C^\prime K_\tau-C.\]
		Since $x_\tau\in X_\tau$ a.s.~and $X_\tau$ is bounded, $K_\tau=\int_\Omega|x_\tau|d\mu$ is bounded by some constant $D$.
		\[\esp{\sum_{t=\tau+1}^TL_t}\geq C^{\prime\prime}\sum_{t=\tau+1}^{T}K_t-(C^\prime D+C).\]
		The $L^1$-norm is $\|x\|_{L^1}=K_\tau+\sum_{t=\tau+1}^T K_t$.  As $K_\tau$ is bounded, $\sum_{t=\tau+1}^T K_t \to\infty$ if and only if $\|x\|_{L^1}\to \infty$.  Since $C^{\prime\prime}>0$ and all other terms are constant or bounded below   \[f_{obj}(x)\geq C^{\prime\prime}\sum_{t=\tau+1}^T K_t-\mbox{Constant}.\]
		Therefore, $f_{obj}(x)\to\infty$ as $\|x\|_{L^1}\to \infty$, proves that the functional is coercive.
		
		\textbf{Weak compactness}:
		Let $\{x^n\}_{n\in \mathbb{N}}\subset L^{1}(\Omega,\mathscr{A},\mu;\mathbb{R}^{T+1-\tau}\times\bRn)$ by a minimizing sequence.  By Assumption~\ref{assumptionAL1:LCMC}, the optimal value is finite.  By coercivity, $\{x^n\}$ is bounded in $L^1$, i.e., there exists a constant $C_1>0$ such that 
		\begin{equation}\label{eq21:solLCMCL1}
			\|x\|_{L^1}\le C_1, \mbox{for all }t\in [\![\tau:T]\!], \mbox{ and }n\in \mathbb{N}.
		\end{equation}
		
		As a previous step, let's bound the integral of $L_t$ over any $\sigma(w_{\tau-1},\ldots,w_{T-1})$-measurable set.  For the lower bound, using Assumption~\ref{assumptionBL1:LCMC}, $L_t\ge -c_2|x_{t-1}|$ along with the boundedness of $x^n$, we have that
		\[\int_A L_td\mu\ge -c_2\int_A|x_{t-1}|d\mu\ge -c_2C_1:-e_t.\]
		For the upper bound, we claim that for each $t\in[\![\tau+1,T]\!]$, there exists a constant $m_t$ such that
		\begin{equation}\label{eq24:solLCMCL1}
			\esp{L_t(\cdot,,x_{t-1}^n,\Delta x_t^n}= \int_\Omega L_t(\omega,x_{t-1}^n,\Delta x_t^n)d\mu \le m_t.
		\end{equation}
		Otherwise, by contradiction, if there exists a $t'\in[\![\tau+1,T]\!]$ such that $\esp{L_{t'}(\omega, x_{t'-1}^n,\Delta x_{t'}^n)}\to \infty$. Then, the entire objective function would diverge to infinity ($f_{obj}(x^n)\to\infty$), because all other expectation terms and the terminal cost $g\left(\esp{x_T}\right)$ are bounded below, contradicting the finiteness of the optimal value.
		
		Then, we obtain that there exists a constant $C_2:=\max_{t\in[\![\tau+1:T]\!]}\{m_t-e_t\}$ such that
		\begin{equation}\label{eq26:solLCMCL1}
			\int_{A}L_{t}(\omega,x_{t-1}^{n},\Delta x_{t}^{n})d\mu\leq C_{2},\quad \forall n\in \mathbb{N}.
		\end{equation}
		
		To prove the existence of a weakly convergent subsequence, we use the Dunford-Pettis theorem, which requires that the sequence is uniformly integrable. To that end, we prove the following properties for $t\in[\![\tau:T]\!]$:
		\begin{enumerate}[label=UI-\roman*]
			\item \label{ui1} Given $\varepsilon>0$, there exists $\delta>0$ such that for every $A\in \sigma(w_{\tau-1},\ldots,w_{T-1})$, we have:
			\begin{equation}\tag{P1}\label{prop1:solLCMCL1}
				\mu(A)\leq \delta \implies \int_{A}|x^{n}_{t}|\leq \varepsilon\quad \forall n\in \mathbb{N} 
			\end{equation}    
			\item \label{ui2} Given $\varepsilon>0$, there exists a set $A\in \sigma(w_{\tau-1},\ldots,w_{T-1})$ with $\mu(A)<+\infty$ such that:
			\begin{equation}\tag{P2}\label{prop2:solLCMCL1}
				\int_{\Omega\backslash A}|x^{n}_{t}|\leq \varepsilon\quad \forall n\in \mathbb{N}    
			\end{equation}
		\end{enumerate}
		
		\textit{\ref{ui1}}: We use induction again.  First note that the property holds for the initial case $t=\tau$: By assumption $X_\tau$ is a bounded set.  Therefore, there exists a constant $C_3>0$ such that $\|x\|\leq C_3$ for all $x\in X_\tau$.  The sequence $\{x^n_\tau\}_{n\in\mathbb{N}}$ satisfies $x_\tau^n\in X_\tau$ a.s.~for all $n$.  This implies the sequence is uniformly bounded a.s.~by $C_3$.  This uniform bound is sufficient to show \ref{ui1}.  For any $\varepsilon>0$, let $\delta=\dfrac{\varepsilon}{C_3}$.  For any measurable set $A\in \sigma(w_{\tau-1},\ldots,w_{T-1})$ such that $\mu(A)\leq \delta$, we have that for all $n\in\mathbb{N}$:
		\[\int_{A}|x^{n}_{\tau}|d\mu\leq C_{3}\mu(A)\leq C_3\delta=\varepsilon.\]
		Thus, condition~\eqref{prop1:solLCMCL1} is satisfied for $t=\tau$.
		
		We now prove the inductive step.  Let $t+1\in[\![\tau+1:T]\!]$ and assume that \eqref{prop1:solLCMCL1} holds true for $t$.  We must show it holds for $t+1$.
		
		Let $\varepsilon>0$. By coercivity of $\theta$ (Assumption~\ref{assumptionBL1:LCMC}), there exists a threshold $M_{\varepsilon}>0$ such that $z\geq M_{\varepsilon}$ implies $\theta(z)\geq \frac{4C_{4}}{\varepsilon} z$, where $C_{4}:=\max\{C_{2},c_{2}C_{1}\}$.  We use this threshold to partition the domain $\Omega$ for each $n$ using the set
		\[M^{n}:=M^{n}_{t+1,\varepsilon}=\{\omega\in \Omega: |\Delta x^{n}_{t+1}|-c_{1}|x^{n}_{t}|\geq M_{\varepsilon}\}.\]  We establish bounds on $|x_{t+1}^n|$ for each part of the partition.
		
		\textit{On $M^n$}:  For $\omega \in M^n$, Assumption~\ref{assumptionBL1:LCMC}, the fact that $\theta$ is non-decreasing, and the reverse triangle inequality ($|\Delta x_{t+1}^n-c_1|x_t^n|\geq |x_{t+1}^n|-(1+c_1)|x_t^n|$) imply
		\begin{equation*}
			\begin{split}
				L_{t+1}(\omega,x^{n}_{t},\Delta x^{n}_{t+1})&\geq\theta(|\Delta x_{t+1}^n|-c_1|x_t^n|)-c_2|x^n_t|,\\
				&\ge \frac{4C_4}{\varepsilon}(|x^n_{t+1}| - (1+c_1)|x^n_t|) - c_2|x^n_t|.  
			\end{split}
		\end{equation*}
		Rearranging this yields the key bound for $\omega \in M^n$:
		\begin{equation}\label{eq29:solLCMCL1}
			|x^n_{t+1}| \le (1+c_1)|x^n_t| + \frac{\varepsilon}{4C_4}(L_{t+1}(\omega,x^{n}_{t},\Delta x^{n}_{t+1}) + c_2|x^n_t|)
		\end{equation}
		
		\textit{On $(M^n)^c$}: For $\omega\in(M^n)^c$, the definition of the set and the reverse triangle inequality directly give 
		\begin{equation}\label{eq30:solLCMCL1}
			|x^n_{t+1}| < (1+c_1)|x^n_t| + M_\varepsilon.
		\end{equation}
		By the inductive hypothesis for $t$, there exists a $\delta_t>0$ such that $\mu(A)\leq\delta_t$ implies that $\int_A |x_t^n|d\mu\leq \dfrac{\varepsilon}{4(1+c_1)}$ for all $n$.  We define $\delta:=\min\left\{\delta_t,\frac{\varepsilon}{4M_\varepsilon}\right\}$.
		
		Now, let $A$ be any $\sigma(w_{\tau-1},\ldots,w_{T-1})$-measurable set with $\mu(A)\leq \delta$.  We split the integral of $|x_{t+1}^n|$ and apply bounds from each partition element:
		\[\int_A|x_{t+1}^n|d\mu=\int_{A\cap M^n}|x_{t+1}^n|d\mu+\int_{A\cap(M^n)^c}|x_{t+1}^n|d\mu.\]
		Combining the terms in \eqref{eq29:solLCMCL1}~and~\eqref{eq30:solLCMCL1}
		\begin{align*}
			\int_{A\cap M^n}|x_{t+1}^n|d\mu&\le \int_{A\cap M^n} \left((1+c_1)|x_t^n|+\dfrac{\varepsilon}{4C_4}(L_{t+1}+c_2|x_t^n|)\right)d\mu,\\
			\int_{A\cap(M^n)^c}|x_{t+1}^n|d\mu&\le \int_{A\cap(M^n)^c}\left((1+c_1)|x^n_t| + M_\varepsilon\right)d\mu,\\
			\int_A|x_{t+1}^n|d\mu&\le (1+c_1)\int_A|x_t^n|d\mu+M_\varepsilon\mu(A)+\dfrac{\varepsilon}{4C_4}\int_{A\cap M^n}(L_{t+1}+c_2|x_t^n|)d\mu.
		\end{align*}
		
		We bound the three terms on the right-hand side using our choice of $\delta$:
		\begin{itemize}
			\item Since $\delta \le \delta_t$, by inductive hypothesis $(1+c_1)\int_A |x^n_t| d\mu \le (1+c_1) \left( \frac{\varepsilon}{4(1+c_1)} \right) = \frac{\varepsilon}{4}$ .
			\item $M_\varepsilon \mu(A) \le M_\varepsilon \left( \frac{\varepsilon}{4M_\varepsilon} \right) = \frac{\varepsilon}{4}$, by definition of $\delta$.
			\item The integral term $\frac{\varepsilon}{4C_4} \int_{A \cap M^n} (L_{t+1} + c_2|x^n_t|) d\mu$ is bounded: This follows from the bounds on $\{L_t\}$ and $\{|x^n_t|\}$ established in \eqref{eq21:solLCMCL1}-\eqref{eq26:solLCMCL1}, which define the constant $C_4$. For sufficiently small $\delta$, this term is bounded by $\frac{\varepsilon}{2}$.
		\end{itemize}
		Summing these bounds, $\int_A |x^n_{t+1}| d\mu \le \frac{\varepsilon}{4} + \frac{\varepsilon}{4} + \frac{\varepsilon}{2} = \varepsilon$.
		This completes the induction, proving \eqref{prop1:solLCMCL1} for all $t \in [\![\tau:T]\!]$.
		
		\textit{\ref{ui2}}:  We proceed by induction.  For the base case $t=\tau$, this is shown to be trivially true since $\mu=1$, and one can always find a set $A$ (e.g. $A=\Omega$) whose complement has a null integral, which is less than $\varepsilon$.
		
		For the inductive step, assume the property holds for $t$: for $\varepsilon>0$, there exists a set $A$, $\mu(A)<\infty$, such that $\int_{\Omega\setminus A} |x_t^n|d\mu\leq \frac{\varepsilon}{4(1+c_1)}$, for all $n\in\mathbb{N}$.  Following the previous proof for coercivity, integrating the inequality~\ref{eq29:solLCMCL1} over $A^c\cap M^n_{t+1,\varepsilon}$ we obtain
		\begin{align*}
			\frac{\varepsilon}{4C_{4}}\int_{A^{c}\cap M_{t+1,\varepsilon}^{n}  }\left(L_{t+1}(\omega,x^{n}_{t},\Delta x^{n}_{t+1}) +c_{2}|x_{t}^{n}|\right)d\mu+(1+c_{1})\int_{A^{c}\cap M_{t+1,\varepsilon}^{n}  }|x^{n}_{t}|d\mu\geq \int_{A^{c}\cap M_{t+1,\varepsilon}^{n}}|x_{t+1}^{n}|d\mu.
		\end{align*}
		
		Combining this inequality, \eqref{eq21:solLCMCL1}, \eqref{eq26:solLCMCL1}, and the definition of $C_{4}$, we have:
		\begin{align*}
			\int_{A^{c}\cap M_{t+1,\varepsilon}^{n}}|x_{t+1}^{n}|d\mu&\leq \frac{\varepsilon}{4C_{4}}\int_{A^{c}\cap M_{t+1,\varepsilon}^{n}  }\left(L_{t+1}(\omega,x^{n}_{t},\Delta x^{n}_{t+1}) +c_{2}|x_{t}^{n}|\right)d\mu+(1+c_{1})\int_{A^{c}\cap M_{t+1,\varepsilon}^{n}  }|x^{n}_{t}|d\mu\\
			&\leq \frac{3\varepsilon}{4}<\varepsilon.
		\end{align*}
		Defining $M_{t+1,\varepsilon}:=\bigcap_{n\in\mathbb{N}}M_{t+1,\varepsilon}^{n}$, by the previous inequality we get:
		\[
		\int_{A^{c}\cap M_{t+1,\varepsilon}}|x_{t+1}^{n}|\leq \varepsilon,\quad \forall n\in \mathbb{N}.
		\]
		We construct the candidate set for the inductive step as \( A^\prime := A \cup M_{t+1,\varepsilon}^c\). Note that \(A^\prime\) is \(\sigma(w_{\tau-1}, \ldots, w_{T-1})\)-measurable, since \(A\) is \(\sigma(w_{\tau-1}, \ldots, w_{t-1})\)-measurable, and it follows that \(A\) is also \(\sigma(w_{\tau-1}, \ldots, w_{T-1})\)-measurable. Moreover, we previously established that each set \(M_{t+1,\varepsilon}^{n}  \) is \(\sigma(w_{\tau-1}, \ldots, w_{T-1})\)-measurable for all \(n \in \mathbb{N}\), and it follows that \(M_{t+1,\varepsilon}\) is measurable as a countable intersection of measurable sets. Hence, \(M_{t+1,\varepsilon}^c\) is also measurable, and so \(A^\prime\) is the union of two measurable sets in \(\sigma(w_{\tau-1}, \ldots, w_{T-1})\), and therefore measurable.  Finally, the fact that $\mu(A^\prime)\leq \mu\leq 1$ allows us to prove \eqref{prop2:solLCMCL1} for $t+1$, which combined with the induction principle proves that this property holds for every $t\in [\![\tau:T]\!]$.
		
		We use properties~\ref{ui1}~and~\ref{ui2} to prove that the entire vector sequence 
		$\mathscr{F}=\{x^{n}\}_{n\in\mathbb{N}}\subset L^{1}(\Omega,\mathscr{A},\mu;\bRn\times \mathbb{R}^{T-\tau})$, $x^n=(x_\tau^n,\ldots,x_T^n)$, is uniformly integrable, satisfying the conditions of the Dunford-Pettis theorem.
		
		Let $\varepsilon>0$. For each $t\in [\![\tau:T]\!]$, using \eqref{prop1:solLCMCL1}, there exists $\delta_{t}$ such that $\mu(A)\leq \delta_t$ implies $\int_{A}|x_{t}^{n}|d\mu\leq \frac{\varepsilon}{T-\tau+1}$ for all $n\in \mathbb{N}$.  Let $\delta:=\min_{t\in[\![\tau:T]\!]}\{\delta_t\}$.  For any $A\in\mathscr{A}$, with $\mu(A)\leq \delta$, the integral of the vector norm is bounded by the sum of the component integrals:
		\[\int_A|x^n|d\mu=\int_A\sum_{t=\tau}^T|x_t^n|d\mu\leq \sum_{t=\tau}^T\dfrac{\varepsilon}{T-\tau+1}<\varepsilon.\]
		This holds for all $n\in\mathbb{N}$.
		
		Additionally, for $\varepsilon>0$, using \eqref{prop2:solLCMCL1}, for each $t\in[\![\tau:T]\!]$, there exists a set $A_t^\varepsilon$ with $\mu(A_t^\varepsilon)<\infty$, such that $\int_{\Omega\setminus A_t^\varepsilon}|x_t^n|d\mu\leq \dfrac{\varepsilon}{T-\tau+1}$ for all $n\in\mathbb{N}$.  Let $A+\cup_{t=\tau}^T A_t^\varepsilon$.  Since this is a finite union of sets with finite measure, $\mu(A)<+\infty$.  The complement $\Omega\setminus A\subset \Omega\setminus A_t^\varepsilon$.  Therefore
		\[\int_{\Omega\setminus A}|x^n|d\mu=\sum_{t=\tau}^T\int_{\Omega\setminus A}|x_t^n|d\mu\leq \sum_{t=\tau}^T\int_{\Omega\setminus A_t^\varepsilon}|x_t^n|d\mu\leq \sum_{t=\tau}^T\dfrac{\varepsilon}{T-\tau+1}<\varepsilon.\]
		This inequality also holds for all $n\in\mathbb{N}$.
		
		Since the sequence $\{x^{n}\}_{n\in\mathbb{N}}$ is $L^1$-bounded (from~\eqref{eq21:solLCMCL1}) and satisfies both uniform absolute continuity and tightness, it is relatively weakly compact by the Dunford-Pettis theorem.  That is, there exists a function $\bar{x}$ such that, via subsequences (not relabeled), $x^{n}$ converges weakly to $\bar{x}$ (i.e., in the topology $\sigma(L^{1}(\Omega,\mathscr{A},\mu;\bRn\times\mathbb{R}^{T+1-\tau}),L^{\infty}(\Omega,\mathscr{A},\mu;\bRn\times\mathbb{R}^{T+1-\tau}))$).
		
		By the weak l.s.c. of $f_{obj}$. we have
		\[f_{obj}(\bar{x})\leq \liminf_{n\to +\infty}f_{obj}(x^{n})=\operatorname{val}(P_{\xi}^{\tau}).\]
		As $\operatorname{val}(P_{\xi}^{\tau})$ is finite, we have that $E\bar{x}=\xi$ and $\bar{x}\in\mathscr{N}_\tau^1$.  This proves that $\bar{x}$ is an optimal solution to the relaxed $L^1$-problem~\eqref{example:LCMCL1}.
		
		Finally, we must show that $L^1$-weak solution $\bar{x}$ is essentially bounded, i.e., $\bar{x}\in\mathscr{N}_\tau$.  We prove by induction that the entire minimizing sequence $\{x^n\}_{n\in\mathbb{N}}$ is uniformly essentially bounded.  Since $L^\infty$ is weakly* closed in $L^1$, this property will pass to the weak limit $\bar{x}$.
		
		For the base case $t=\tau$, $x_\tau^n\in X_\tau$ a.s.~for all $n$.  As $X_\tau$ is bounded by assumption, there exists a constant $C_\tau>0$ such that $|x_\tau^n|\leq X_\tau$ a.s.~for all $n\in\mathbb{N}$.  Thus $\{x_\tau^n\}$ is uniformly essentially bounded.
		
		Assume $\{x_t^n\}$ is uniformly essentially bounded by a constant $C_t>0$ for $t\in[\![\tau:T-1]\!]$.  We must show that $\{x_{t+1}^n\}$ is also uniformly essentially bounded.
		
		Since $J(x^n)$ is finite, $L_{t+1}(\omega, x_t^n,\Delta x_{t+1}^n)<\infty$ a.s.~ By Lemma~\ref{lem:Char2Bolza} (part 1), this implies the existence (a.s.) of a control $u_t^n$ satisfying the dynamics:
		\[\Delta x_{t+1}^n=A x_t^n+B u_t^n + w_t.\]
		
		From Assumption~\ref{assumptionBL1:LCMC} ($\theta$ non-decreasing) and the reverse triangle inequality, we have that (a.s.)
		\begin{align*}
			L_{t+1}(\omega,x_t^n,\Delta x_{t+1}^n) &\ge \theta(\max{0, |\Delta x^n_{t+1}| - c_1|x^n_t|}) - c_2|x^n_t|, \\
			&\ge \theta(\max{0, |Bu^n_t| - |Ax^n_t| - |w_t| - c_1|x^n_t|}) - c_2|x^n_t|.\end{align*}
		By the inductive hypothesis $|x_t^n|\leq C_t$ a.s.~ We also assume the noise $w_t\in L^\infty$.  Therefore, there exists a constant $C_t^\prime>0$ (independent of $n$) such that $|Ax_t^n|+|w_t|+c_1|x_t^n|\leq C_t^\prime$ a.s.~ This simplifies the bound to
		\begin{equation}\label{eq38:solLCMCL1}
			L_{t+1}(\omega,x_t^n,\Delta x_{t+1}^n)\geq \theta (|Bu_t^n|-C_t^\prime)-c_2C_t\,\, \mbox{(a.s.)}.
		\end{equation}
		
		The sequence of integrals $\int_\Omega L_{t+1}(\omega,x_{t-1}^n\Delta x_t^n)d\mu$ is uniformly bounded above (by $m_{t+1}$ in \eqref{eq24:solLCMCL1}).  If the sequence $\{BU_t^n\}$ were not uniformly essentially bounded, there would exist a set $A_3$ with $\mu(A_3)>0$ on which $|Bu_t^n|\to\infty$ (for a subsequence).  By coercivity of $\theta$, \eqref{eq38:solLCMCL1} would imply that $L_{t+1}(\omega,x_t^n,\Delta x_{t+1}^n)\to \infty$ on $A_3$. This would contradict the uniform upper bound on the integral established by \eqref{eq26:solLCMCL1}.
		
		Thus $\{Bu_t^n\}$ must be uniformly essentially bounded.  The state $x_{t+1}^n$ is given by the sum
		\[x_{t+1}^n=(I+A)x_t^n+Bu_t^n+w_t,\]
		Since $\{x_t^n\}$, $\{Bu_t^mn\}$, and $\{w_t\}$ are all uniformly essentially bounded sequences, their sum $\{x_{t+1}^n\}$ is also uniformly essentially bounded.
		
		By induction, $\{x_t^n\}$ is uniformly essentially bounded for all $t\in[\![\tau:T]\!]$.  A uniformly essentially bounded sequence in $L^1$ that converges weakly must have its weak limit in $L^\infty$.  Therefore $\bar{x}\in L^\infty(\Omega,\mathscr{A},\mu;\mathbb{R}^{n(T-\tau+1)})$, which implies $\bar{x}\in\mathscr{N}_\tau$.  This confirms $\bar{x}$ is an optimal solution to the Bolza problem~\eqref{example:LCMCeq3}.
		
		\textit{3. Existence for~\eqref{example:LCMC}}:.
		
		We now show that the optimal solution $\bar{x}$ to the Bolza problem, whose existence was guaranteed by part \eqref{lem:c2b2}, corresponds to a fully optimal pair $(\bar{x}, \bar{u})$ for the original LC problem~\eqref{example:LCMC}.
		
		From part \eqref{lem:c2b2}, we have an optimal solution $\bar{x} \in \mathscr{N}_\tau$ for the Bolza problem~\eqref{example:LCMCeq3}. The optimality of $\bar{x}$ implies its cost is finite:
		\[\mathbf{V}_\tau(\xi) = \esp{\sum_{t=\tau+1}^{T}L_{t}(\omega,\bar{x}_{t-1},\Delta \bar{x}_{t})} + g(\esp{\bar{x}_{T}}) < +\infty.\]
		
		This, in turn, implies that $L_t(\omega, \bar{x}_{t-1}, \Delta \bar{x}_t) < +\infty$ for a.s.~$\omega$. By part \eqref{lem:c2b1} of this lemma, we know the infimum in the definition of $L_t$ is attained. We can therefore define the following non-empty, set-valued map for each $t$:
		\[U_t := \operatorname{argmin}_{u \in \mathbb{R}^m} \Phi_t(\omega, \bar{x}_{t-1}, \Delta \bar{x}_t, u).\]
		
		We must find a measurable selection $\bar{u}_{t-1}$ from this map. In the proof of Lemma~\ref{lem:LC2Bolza}, we established that for a fixed (measurable) $\bar{x}$, the function $(\omega, u) \mapsto \Phi_t(\omega, \bar{x}, u)$ is a normal integrand. A standard result in variational analysis (e.g., \cite[Theorem 14.37]{RockafellarWets1998}) states that the argmin map of a normal integrand is measurable and has closed images.
		
		The Measurable Selection Theorem thus guarantees the existence of a $\mathscr{G}_{t-1}$-measurable function $\bar{u}_{t-1}$ such that $\bar{u}_{t-1} \in U_t$ for a.s.~$\omega$. By concatenating these selections, we obtain a control process $\bar{u} = (\bar{u}_\tau, \dots, \bar{u}_{T-1})$ which, by construction, belongs to the space $\mathscr{M}_\tau$. The pair $(\bar{x}, \bar{u})$ is therefore feasible for the original LC problem.
		
		Finally, we verify its optimality. By the construction of $\bar{u}$, a.s.~we have $\Phi_t(\cdot,\bar{x}_{t-1},\Delta \bar{x}_{t}, \bar{u}_{t-1}) = L_t(\cdot,\bar{x}_{t-1},\Delta \bar{x}_{t},\bar{u}_{t-1})$. The total cost of the pair $(\bar{x}, \bar{u})$ is therefore:
		\begin{align*}
			\esp{\sum_{t=\tau+1}^{T}\Phi_{t}(\cdot,\bar{x}_{t-1},\Delta \bar{x}_{t},\bar{u}_{t-1})} + g(\esp{\bar{x}_{T}})
			&=\esp{\sum_{t=\tau+1}^{T}L_{t}(\omega,\bar{x}_{t-1},\Delta \bar{x}_{t})} + g(\esp{\bar{x}_{T}})\\
			&= \mathbf{V}\tau(\xi).
		\end{align*}
		
		Since the pair $(\bar{x}, \bar{u})$ achieves the infimal value $\mathbf{V}_\tau(\xi)$ (which, by Lemma~\ref{lem:LC2Bolza}, is the value of the LC problem), it is an optimal solution. This completes the proof of the lemma.
	\end{proof}
	
	With the technical lemmas established, we can now apply our main theory. The key is to find the Hamiltonian $H_t$ associated with our new Bolza Lagrangian $L_t$.
	
	\begin{prop}{(Hamiltonian for LC problems)}\label{prop:H4Bolza}
		The Hamiltonian $H_t(\omega, x, p) = \sup_v \{p \cdot v - L_t(\omega, x, v)\}$ associated with the Bolza problem~\eqref{example:LCMCeq3} is given by:
		\begin{equation}\label{eq:H4Bolza}
			H_{t}(\omega,x,p)=\sup_{\substack{u\in \mathbf{U}\\f_{t-1}(x,u)\leq 0}}\left\{ p\cdot Bu -\ell_{t-1}(x,u)\right\}+ p\cdot ( (A-I_{n})x + w_{t-1})+\delta_{X_{t-1}}(x).
		\end{equation}
	\end{prop}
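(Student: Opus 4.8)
The claim is a pointwise‑in‑$\omega$ convex–analytic identity: there is nothing stochastic to verify here, only the fact (already supplied by Lemma~\ref{lem:LC2Bolza}) that $L_t$ is the legitimate Bolza Lagrangian of \eqref{example:LCMCeq3}, so that $H_t(\omega,x,p)=\sup_v\{p\cdot v-L_t(\omega,x,v)\}$. The plan is to substitute the definition of $L_t$ as a partial infimum, collapse the resulting double optimization into a single supremum over the control by using the affine dynamics encoded in $C_{t-1}$, and then peel off the $u$-independent terms.

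\textbf{Step 1: unfold $L_t$ into a joint supremum.} Since $L_t(\omega,x,v)=\inf_{u\in\mathbb{R}^m}\Phi_t(\omega,x,v,u)$, we have $-L_t(\omega,x,v)=\sup_u\{-\Phi_t(\omega,x,v,u)\}$, and therefore $p\cdot v-L_t(\omega,x,v)=\sup_u\{p\cdot v-\Phi_t(\omega,x,v,u)\}$. Taking the supremum over $v$ and using that a supremum of suprema is a joint supremum,
\[H_t(\omega,x,p)=\sup_{(v,u)\in\bRn\times\mathbb{R}^m}\big\{p\cdot v-\Phi_t(\omega,x,v,u)\big\}.\]

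\textbf{Step 2: eliminate $v$ and separate variables.} Now expand $\Phi_t=\ell_{t-1}(x,u)+\delta_{C_{t-1}}(x,v,u)+\delta_{D_{t-1}}(x,u)+\delta_{U_{t-1}}(u)+\delta_{X_{t-1}}(x)$. The indicator $\delta_{C_{t-1}}(x,v,u)$ forces $v=(A-I_n)x+Bu+w_{t-1}$, so for each admissible $u$ there is exactly one admissible $v$; substituting it removes the $v$-variable. The indicators $\delta_{D_{t-1}}(x,u)+\delta_{U_{t-1}}(u)$ restrict the supremum to $\{u\in\mathbf{U}:f_{t-1}(x,u)\le 0\}$, while $\delta_{X_{t-1}}(x)$ does not involve $u$ and pulls out of the supremum. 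Writing $p\cdot v=p\cdot Bu+p\cdot\big((A-I_n)x+w_{t-1}\big)$ and noting that the last term is likewise $u$-independent, we collect all $u$-free quantities outside the supremum and obtain exactly \eqref{eq:H4Bolza}.

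\textbf{Expected obstacle.} The computation itself is routine; the only point requiring care is the bookkeeping of extended-real arithmetic, in particular the meaning of the additive $\delta_{X_{t-1}}(x)$ term. When $x\notin X_{t-1}$ one has $\Phi_t\equiv+\infty$, hence $L_t(\omega,x,\cdot)\equiv+\infty$ and $H_t(\omega,x,p)=-\infty$; the formula \eqref{eq:H4Bolza} records the state constraint $x\in X_{t-1}$ via the $+\delta_{X_{t-1}}$ summand, which is the value relevant in the sequel since $H_t$ is only ever evaluated along feasible trajectories, where $x_{t-1}\in X_{t-1}$ a.s. One should also state the convention that whenever $\{u\in\mathbf{U}:f_{t-1}(x,u)\le 0\}=\emptyset$ the inner supremum is $-\infty$, which is consistent with $L_t(\omega,x,\cdot)\equiv+\infty$ at such $x$. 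No measurability arguments are needed at this stage.
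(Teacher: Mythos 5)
Your proof is correct and follows essentially the same route as the paper's: both collapse the sup--inf into a joint supremum over $(v,u)$, use the indicator of $C_{t-1}$ to eliminate $v$, and pull the $u$-independent terms out of the supremum. Your observation that the literal conjugate gives $-\infty$ off the state constraint (i.e.\ the summand should strictly be $-\delta_{X_{t-1}}(x)$ rather than $+\delta_{X_{t-1}}(x)$, immaterial along feasible trajectories) is a sign subtlety the paper's own proof of \eqref{eq:H4Bolza} also glosses over, even though it writes $-\delta_{X_{\tau}}$ in the later LQ specialization.
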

	
	\begin{proof}
		Let $L_t(\omega,x,v)$ be defined as in \eqref{lagrangiandef:LCMCinfcase}. The Hamiltonian associated with this state-constrained problem is given by
		$$H_{t}(\omega,x,p) := \sup_{v\in \bRn}\left\{p\cdot v - \inf_{u\in \mathbb{R}^{m}}\left\{\Phi_t'(\omega,x,v,u) \right\}\right\}+\delta_{X_{t-1}}(x),$$
		where $\Phi_t' = \ell_{t-1}(x,u)+\delta_{\mathbf{U}}(u)+\delta_{D_{t-1}}(x,u)+\delta_{C_{t-1}}(x,v,u)$ is the part of the Lagrangian \eqref{lagrangiandef:LCMCinfcase} inside the infimum.
		
		We can rewrite the $\sup - \inf$ structure as a single $\sup$:
		\begin{align*}
			H_{t}(\omega,x,p) &= \sup_{v\in \bRn}\left\{p\cdot v + \sup_{u\in \mathbb{R}^{m}}\left\{-\Phi_t'(\omega,x,v,u) \right\}\right\}+\delta_{X_{t-1}}(x) \\
			&= \sup_{v\in \bRn, , u\in \mathbb{R}^{m}}\left\{p\cdot v - \Phi_t'(\omega,x,v,u) \right\}+\delta_{X_{t-1}}(x).
		\end{align*}
		
		Substituting the definition of $\Phi_t'$:
		$$H_{t}(\omega,x,p) = \sup_{v, u}\left\{p\cdot v - \ell_{t-1}(x,u) - \delta_{\mathbf{U}}(u) - \delta_{D_{t-1}}(x,u) - \delta_{C_{t-1}}(x,v,u) \right\} + \delta_{X_{t-1}}(x).$$
		
		We can separate the optimization, grouping terms by the variable over which the supremum is taken:
		$$H_{t}(\omega,x,p) = \sup_{u\in \mathbb{R}^{m}}\left\{ -\ell_{t-1}(x,u) - \delta_{\mathbf{U}}(u) - \delta_{D_{t-1}}(x,u) + \sup_{v\in \bRn}\left\{ p\cdot v - \delta_{C_{t-1}}(x,v,u) \right\} \right\} + \delta_{X_{t-1}}(x).$$
		
		The inner supremum, $\sup_{v}\left\{ p\cdot v - \delta_{C_{t-1}}(x,v,u) \right\}$, is the conjugate of the indicator function $\delta_{C_{t-1}}$. This supremum is finite (and equal to $p \cdot v$) if and only if $v$ satisfies the constraint $v = (A-I_n)x + Bu + w_{t-1}$, and is $+\infty$ otherwise.
		
		By substituting this result, the optimization over $v$ is eliminated:
		$$H_{t}(\omega,x,p) = \sup_{u\in \mathbb{R}^{m}}\left\{ -\ell_{t-1}(x,u) - \delta_{\mathbf{U}}(u) - \delta_{D_{t-1}}(x,u) + p\cdot ((A-I_n)x + Bu + w_{t-1}) \right\} + \delta_{X_{t-1}}(x).$$
		
		Finally, we include the indicator functions $\delta_{\mathbf{U}}$ and $\delta_{D_{t-1}}$ into the constraints of the supremum and regroup the terms that are constant with respect to $u$:
		$$H_{t}(\omega,x,p) = \sup_{\substack{u\in \mathbf{U}\\f_{t-1}(x,u)\leq 0}}\left\{ p\cdot Bu -\ell_{t-1}(x,u)\right\} + p\cdot ( (A-I_{n})x + w_{t-1}) + \delta_{X_{t-1}}(x).$$
		This is the expression given in \eqref{eq:H4Bolza}.    
	\end{proof}
	
	This proposition is the main payoff of the reformulation: it gives us a clear, computable Hamiltonian for the LC problem. We can now state the main theorem for this section.
	
	\begin{teo}{(Method of Characteristics for LC problems)}\label{thm:charact_LC}
		Assume that the \eqref{example:LCMC}-problem satisfies the conditions of Lemma~\ref{lem:Char2Bolza}, \eqref{hyp:USCcondition}, \eqref{assumptionAL1:LCMC}, \eqref{assumptionBL1:LCMC} and that the resulting Bolza problem~\eqref{example:LCMCeq3} satisfies the qualification hypotheses \ref{h1}-\ref{h3}.  If $\eta \in \partial \mathbf{V}_{\tau}(\xi)$, then there exists an optimal primal solution $(\bar{x}, \bar{u})$ and an optimal dual solution $\bar{p}$ that together form a discrete-time Hamiltonian trajectory, i.e., they satisfy:
		\begin{equation}\label{eq:LChamiltonian}
			(-\espc{t}{\Delta \bar{p}_t},\Delta \bar{x}_t)\in\partial H_t(\omega,\bar{x}_{t-1},\espc{t}{\bar{p}_t}),\qquad\forall t\in[\![\tau+1,T]\!],    
		\end{equation}
		and the transversality condition $-p_T \in \partial g\left(\esp{x_T}\right)$.
	\end{teo}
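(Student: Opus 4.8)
The proof assembles the three preparatory results of this section; essentially no new analytical work is needed beyond verifying that the reduced problem is a legitimate instance of the general theory of Sections~\ref{sec:duality}--\ref{sec:L-Cproblems}.

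\emph{Step 1 (reduce \eqref{example:LCMC} to the Bolza problem \eqref{example:LCMCeq3}).} By Lemma~\ref{lem:LC2Bolza}, $\mathbf{V}_\tau$ is simultaneously the value function of the LC problem and that of \eqref{example:LCMCeq3}, the latter being a stochastic Bolza problem whose stage costs $L_t$ are the partial infima \eqref{lagrangiandef:LCMCinfcase}. To invoke the machinery developed above on \eqref{example:LCMCeq3}, I would first check that it obeys the standing hypotheses \eqref{hyp:convex} and \eqref{hyp:integrability}. Convexity, properness and lower semicontinuity of $L_t(\omega,\cdot,\cdot)$ are exactly Lemma~\ref{lem:Char2Bolza}\eqref{lem:c2b1}; joint measurability of $\omega\mapsto\operatorname{epi}L_t(\omega,\cdot,\cdot)$ follows because $\Phi_t$ is a normal integrand jointly in $(x,v,u)$ (argued as in the proof of Lemma~\ref{lem:LC2Bolza}) and normality is preserved under partial minimization over $u$, so $L_t$ is a normal convex integrand. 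Condition \eqref{hyp:integrability} is immediate from Assumption~\ref{assumptionBL1:LCMC}: since $\theta$ is bounded below, say by $m_\theta\in\bR$, one has $L_t(\omega,x,v)\ge m_\theta-c_2\rho$ whenever $|x|,|v|\le\rho$, a constant and hence summable lower bound. The terminal cost $g$ is proper, convex and l.s.c.~by assumption.

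\emph{Step 2 (a primal minimizer, hence the dual is attained).} By Lemma~\ref{lem:Char2Bolza}\eqref{lem:c2b2}, problem \eqref{example:LCMCeq3} attains its infimum at some $\bar{x}\in\mathscr{N}_\tau$, so in particular $\mathbf{V}_\tau(\xi)<+\infty$; combined with the weak-duality bound of Proposition~\ref{prop:weak} (which forces $\mathbf{V}_\tau>-\infty$), the convexity of $\mathbf{V}_\tau$, and Proposition~\ref{prop:conjugacy_1}, the hypothesis $\eta\in\partial\mathbf{V}_\tau(\xi)$ gives $\eta\in\operatorname{dom}(\mathbf{V}_\tau^\ast)=\operatorname{dom}(\mathbf{W}_\tau)$. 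By Lemma~\ref{lem:Char2Bolza}\eqref{lem:c2b3}, there is a measurable selection $\bar{u}\in\mathscr{M}_\tau$, constructed from this same $\bar{x}$, such that $(\bar{x},\bar{u})$ is an optimal solution of \eqref{example:LCMC}.

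\emph{Step 3 (apply the converse method of characteristics and identify the Hamiltonian).} Since \eqref{example:LCMCeq3} is assumed to satisfy the qualification hypotheses \ref{h1}--\ref{h3} and, by Step~2, attains its infimum at $\bar{x}$, Theorem~\ref{thm:charact_thrm_2} applies with $\eta\in\partial\mathbf{V}_\tau(\xi)$: it produces a discrete-time Hamiltonian trajectory $\{(\bar{x}_t,\bar{p}_t)\}_{t=\tau}^{T}\in\mathscr{N}_\tau\times\mathscr{P}_\tau$ with $(\esp{\bar{x}_\tau},\esp{\bar{p}_\tau})=(\xi,-\eta)$, satisfying the Hamiltonian inclusion \eqref{eq:dHt} and the transversality condition $-\bar{p}_T\in\partial g(\esp{\bar{x}_T})$; moreover, as in the proof of Proposition~\ref{prop:conjugacy_1}, $\{\bar{p}_t\}_{t=\tau}^{T}$ realizes $\mathbf{W}_\tau(\eta)$, i.e.~it is an optimal dual solution. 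Finally, Proposition~\ref{prop:H4Bolza} computes the Hamiltonian of \eqref{example:LCMCeq3} as the explicit expression \eqref{eq:H4Bolza}, so the inclusion \eqref{eq:dHt} reads precisely as \eqref{eq:LChamiltonian}. This yields the asserted triple $(\bar{x},\bar{u},\bar{p})$ and closes the argument.

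\emph{Main obstacle.} The only genuinely delicate point is the bookkeeping at the interface of Step~1: one must ensure that $L_t$ is a bona fide \emph{normal convex integrand} (i.e.~measurability of the epigraphical map, not merely pointwise convexity and l.s.c.), so that \cite{RocWet83} --- on which Theorems~\ref{thm:charact_thrm_1}, \ref{thm:charact_thrm_2} and Proposition~\ref{prop:conjugacy_1} rely --- is legitimately applicable to \eqref{example:LCMCeq3}, and that the minimizer $\bar{x}$ of the Bolza problem is exactly the state component of the optimal pair $(\bar{x},\bar{u})$ for \eqref{example:LCMC}. Once the normality-under-partial-minimization claim is pinned down and Lemma~\ref{lem:Char2Bolza} is invoked, everything else is a chain of citations.
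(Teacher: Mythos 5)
Your proposal is correct and follows essentially the same route as the paper's proof: reduce \eqref{example:LCMC} to the Bolza problem via Lemma~\ref{lem:LC2Bolza}, obtain the primal minimizer and the measurable control selection from Lemma~\ref{lem:Char2Bolza}, apply Theorem~\ref{thm:charact_thrm_2} to produce the Hamiltonian trajectory, and identify the Hamiltonian through Proposition~\ref{prop:H4Bolza}. Your Step~1 (explicitly verifying that $L_t$ is a normal convex integrand satisfying \eqref{hyp:convex} and \eqref{hyp:integrability}) is a worthwhile check that the paper's own, much terser, proof leaves implicit.
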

	
	\begin{proof}
		Lemma~\ref{lem:LC2Bolza} guarantees the existence of an optimal solution $\bar{x}$ for the Bolza problem \eqref{example:LCMCeq3}. The stated assumptions satisfy the hypotheses of Theorem~\ref{thm:charact_thrm_2}, which yields the existence of the Hamiltonian trajectory $\{(\bar{x}_t, \bar{p}_t)\}$. Lemma~\ref{lem:LC2Bolza} also guarantees the existence of an optimal control $\bar{u}$, which corresponds to the $u$ attaining the supremum in the definition of the Hamiltonian $H_t$ (Prop.~\ref{prop:H4Bolza}) along this trajectory.
	\end{proof}

	\subsection{Special Case: The Linear-Quadratic (LQ) Problem.}
	
	We now specialize the previous framework to the classic Linear-Quadratic (LQ) problem.  Consider a positive definite matrix $R\in\mathbb{R}^{m \times m}$ and positive semidefinite matrices $P, Q\in\mathbb{R}^{n \times n}$.  For each $t \in [\![\tau:T-1]\!]$, set the  cost function $\ell_{t}:\bRn\times\mathbb{R}^{m}\rightarrow \mathbb{R}$ and $g:\bRn\rightarrow \mathbb{R}$ as:
	\begin{align*}
		\ell_{t}(x,u)=x^{\top}Px+u^{\top}Ru, \quad\mbox{and}\quad  g(x)=x^{\top}Qx.
	\end{align*}
	
	The general \eqref{example:LCMC}~problem becomes the (LQ) problem:
	\begin{equation}\tag{LQ}\label{example:LQ}
		\begin{cases}
			\text{ Minimize }&\esp{\sum_{t=\tau}^{T-1}\bigl[x_t^{\top}  P x_t+u_t^{\top} R u_t\bigr]}+g^{LQ}\left(\esp{x_T}\right),\\
			\text{ over all }& \{x_t\}_{t=\tau}^T\in \mathscr{N}_{\tau},\quad \{u_{t}\}_{t=\tau}^{T-1}\in \mathscr{M}_{\tau},\\ 
			\text{ subject to } & x_{t+1}=Ax_t+B u_t+w_{t} \quad \forall\, t\in [\![\tau:T-1]\!],\\
			& \esp{x_{\tau}}=\xi,\\
			&x_{\tau}\in X_{\tau} \text{ a.s.}
		\end{cases}
	\end{equation}
	
	The assumption $x_\tau \in X_\tau$ (a bounded set) is crucial. While the (LQ) problem is solvable without this, the resulting optimal solution $x$ may not be in $L^\infty$, which is required by our framework. The bounded initial state ensures the solution remains in $\mathscr{N}_\tau$.
	
	To apply our theory, we first write the (LQ) problem in the Bolza form \eqref{example:LCMCeq3} using its specific Lagrangian, $L_t^{LQ}$. This is a special case of \eqref{lagrangiandef:LCMCinfcase} where $f_t$ and $U_t$ are trivial (i.e., $\delta_{D_t}$ and $\delta_{U_t}$ are zero).
	\begin{align*}
		L^{LQ}_{\tau+1}(\omega,x,v) &:= \inf_{u\in \mathbb{R}^{m}}\left\{x^{\top}Px + u^{\top}Ru \mid v=(A-I_n)x + Bu + w_{\tau}\right\} + \delta_{X_{\tau}}(x) \\
		L_{t}^{LQ}(\omega,x,v) &:= \inf_{u\in \mathbb{R}^{m}}\left\{x^{\top}Px + u^{\top}Ru \mid v=(A-I_n)x + Bu + w_{t-1}\right\},\,\forall t \in[\![\tau+2:T]\!].
	\end{align*}
	Our first step is to derive the explicit Hamiltonian for this Lagrangian.
	
	\begin{prop}{(The (LQ)-Hamiltonian)}\label{prop:LQHamiltonian}
		The Hamiltonian $H_t^{LQ}$ associated with the Bolza Lagrangian $L_t^{LQ}$ is:
		\begin{align*}
			H_{\tau+1}(\omega,x,p)&=\frac{1}{4}(B^{\top}p)^{\top}R^{-1}(B^{\top}p) - x^{\top}Px + p^{\top}((A-I_{n})x + w_{\tau})-\delta_{X_{\tau}}(x),\\
			H_{t}(\omega,x,p)&=\frac{1}{4}(B^{\top}p)^{\top}R^{-1}(B^{\top}p) - x^{\top}Px + p^{\top}((A-I_{n})x + w_{t-1}),\,\forall t \in[\![\tau+2:T]\!].
		\end{align*}
	\end{prop}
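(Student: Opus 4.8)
The plan is to specialize the general Hamiltonian formula of Proposition~\ref{prop:H4Bolza} to the LQ data and then evaluate, explicitly, the supremum over the control variable, which in this case becomes an unconstrained strictly concave quadratic maximization.

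First I would record that for the \eqref{example:LQ} problem all the constraint maps degenerate: there is no mixed constraint (the inequality $f_{t-1}(x,u)\le 0$ holds for every $(x,u)$), the control set is $\mathbf{U}=\mathbb{R}^m$, and the only state constraint is $x_\tau\in X_\tau$, which in the Bolza reformulation \eqref{example:LCMCeq3} is active only at the stage $t=\tau+1$, where the state variable $x$ plays the role of $x_\tau$; for $t\in[\![\tau+2:T]\!]$ no state set is imposed, so the corresponding indicator in \eqref{eq:H4Bolza} is that of $\bRn$ and vanishes. Substituting $\ell_{t-1}(x,u)=x^\top P x+u^\top R u$ into \eqref{eq:H4Bolza} then collapses the feasible set of the inner supremum to all of $\mathbb{R}^m$ and leaves
\[
\sup_{u\in\mathbb{R}^m}\bigl\{\,p\cdot Bu-x^\top P x-u^\top R u\,\bigr\}=-x^\top P x+\sup_{u\in\mathbb{R}^m}\bigl\{\,(B^\top p)\cdot u-u^\top R u\,\bigr\},
\]
together with the affine term $p\cdot((A-I_n)x+w_{t-1})$ and, only when $t=\tau+1$, the indicator of $X_\tau$.

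Next I would carry out the one-variable maximization. Since $R$ is positive \emph{definite}, the map $u\mapsto(B^\top p)\cdot u-u^\top R u$ is strictly concave and coercive, hence attains its maximum at the unique stationary point $u^\star=\tfrac{1}{2}R^{-1}B^\top p$ obtained from $B^\top p-2Ru=0$; plugging back gives $(B^\top p)\cdot u^\star-(u^\star)^\top R u^\star=\tfrac{1}{2}(B^\top p)^\top R^{-1}(B^\top p)-\tfrac{1}{4}(B^\top p)^\top R^{-1}(B^\top p)=\tfrac{1}{4}(B^\top p)^\top R^{-1}(B^\top p)$. Reassembling the pieces yields $H_t(\omega,x,p)=\tfrac{1}{4}(B^\top p)^\top R^{-1}(B^\top p)-x^\top P x+p\cdot((A-I_n)x+w_{t-1})$ for $t\in[\![\tau+2:T]\!]$, while at $t=\tau+1$ the extra $X_\tau$-term enters so as to force $H_{\tau+1}(\omega,\cdot,p)\equiv-\infty$ off $X_\tau$ — the value dictated by $L^{LQ}_{\tau+1}(\omega,x,\cdot)\equiv+\infty$ there, consistent with concavity of $H_{\tau+1}$ in the state — i.e. it appears as $-\delta_{X_\tau}(x)$, while $w_{t-1}$ specializes to $w_\tau$. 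This is exactly the pair of displayed formulas.

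No genuinely hard step is involved: the argument is a routine specialization of \eqref{eq:H4Bolza} followed by an elementary quadratic maximization. The two points that need care are (i) invoking positive definiteness of $R$ — not mere semidefiniteness — so that the supremum over $u$ is finite and attained, which is what makes $R^{-1}$ meaningful and the closed form real-valued on $\dom(H_t)$; and (ii) the bookkeeping on the state-constraint indicator, namely recognizing that $X_\tau$ constrains only the $t=\tau+1$ stage and fixing its sign from $L^{LQ}_{\tau+1}(\omega,x,\cdot)\equiv+\infty$ for $x\notin X_\tau$, so that $H_{\tau+1}(\omega,x,p)=\sup_v\{p\cdot v-(+\infty)\}=-\infty$ there.
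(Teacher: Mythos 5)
Your proposal is correct and follows essentially the same route as the paper's own proof: specialize the general LC Hamiltonian of Proposition~\ref{prop:H4Bolza}, observe that the mixed and control constraints vanish, and solve the unconstrained strictly concave quadratic maximization over $u$ via the stationarity condition $B^\top p - 2Ru = 0$. Your extra care with the sign of the state-constraint indicator (arguing $H_{\tau+1}(\omega,x,p)=-\infty$ off $X_\tau$, hence $-\delta_{X_\tau}(x)$) is in fact slightly more careful than the paper's proof, which writes ``$+\,\delta_{X_{t-1}}(x)$'' while the proposition it proves displays the minus sign.
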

	
	\begin{proof}
		We apply the (LC) Hamiltonian formula from Proposition~\ref{prop:H4Bolza}. In this (LQ) case, the constraints $f_{t-1}(x,u) \le 0$ and $u \in \mathbf{U}$ are absent, so the supremum is unconstrained:
		$$H_{t}(\omega,x,p)=\sup_{u\in \mathbb{R}^{m}}\left\{ p\cdot Bu - (x^{\top}Px+u^{\top}Ru) \right\}+ p\cdot ( (A-I_{n})x + w_{t-1})+\delta_{X_{t-1}}(x)
		$$
		(where $\delta_{X_{\tau}}(x) = 0$ for $t \ge \tau+2$).  The term inside the $\sup$ is $h(u) = (B^\top p)^\top u - u^\top R u - x^\top P x$. Since $R$ is positive definite, $h(u)$ is a strictly concave function of $u$. By Fermat's principle, the unique maximum is found by setting the gradient with respect to $u$ to zero:
		$$\nabla_u h(u) = B^\top p - 2Ru = 0 \implies \bar{u} = \frac{1}{2}R^{-1}(B^\top p).
		$$
		Substituting this optimal $\bar{u}$ back into $h(u)$ gives the value of the supremum:
		\begin{align*}
			\sup_u h(u) &= (B^\top p)^\top (\frac{1}{2}R^{-1}B^\top p) - (\frac{1}{2}R^{-1}B^\top p)^\top R (\frac{1}{2}R^{-1}B^\top p) - x^\top P x \\
			&= \frac{1}{2} p^\top B R^{-1} B^\top p - \frac{1}{4} p^\top B R^{-1} R R^{-1} B^\top p - x^\top P x \\
			&= \frac{1}{4} p^\top B R^{-1} B^\top p - x^\top P x.
		\end{align*}
		Adding the remaining terms $p\cdot ( (A-I_{n})x + w_{t-1}) + \delta_{X_{t-1}}(x)$ yields the expressions in the proposition.
	\end{proof}
	
	With the explicit Hamiltonian now in hand, we can state and prove the main result for the (LQ) case, which characterizes the optimal primal and dual trajectories.
	
	\begin{prop}{(Method of Characteristics for (LQ) Problems)}\label{prop:LQdynamics}
		Assume the (LQ) problem~\eqref{example:LQ} satisfies the existence conditions \eqref{hyp:USCcondition}, \eqref{assumptionAL1:LCMC}, \eqref{assumptionBL1:LCMC} and the qualification conditions \ref{h1}-\ref{h3}.  If $\eta\in \partial\mathbf{V}_{\tau}(\xi)$, then there exists an optimal trajectory $x\in \mathscr{N}_{\tau}$ and an associated dual trajectory $p\in \mathscr{P}_{\tau}$ satisfying $(\esp{x_{\tau}},\esp{ p_{\tau}}=(\xi,-\eta)$, the transversality condition $p_{T}=-2Q\esp{ x_{T}}$ a.s., and the following Hamiltonian dynamics for $t \in [\![\tau+1:T]\!]$:
		\begin{align*}
			\espc{\tau+1}{\Delta p_{\tau+1}}&\in \{2P\,x_{\tau}-(A^{\top}-I_{n})\esp{\tau+1}{p_{\tau+1}}\}+\mathcal{N}_{X_{\tau}},\\
			\espc{t}{\Delta p_{t}}&=2P\,x_{t-1}-(A^{\top}-I_{n})\,\espc{t}{p_{t}}\quad \forall t\in [\![\tau+2:T]\!],\\
			\Delta x_{t}&=\tfrac{1}{2}BR^{-1}\bigl(B^{\top}\espc{t}{p_{t}}\bigr)\;+\;(A-I_{n})\,x_{t-1}\;+\;w_{t-1}\quad \forall t\in [\![\tau+1:T]\!].
		\end{align*}
	\end{prop}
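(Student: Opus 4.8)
The plan is to read the statement off Theorem~\ref{thm:charact_LC} and then make the abstract Hamiltonian inclusion explicit using the formula for $H_t^{LQ}$ from Proposition~\ref{prop:LQHamiltonian}. First, observe that the (LQ) data is the instance of the (LC) data in which the mixed-constraint map $f_{t-1}$ is absent and the control set $\mathbf{U}=\mathbb{R}^m$ is trivial; consequently the Bolza Lagrangians $L_t^{LQ}$ defined above coincide with the ones constructed in Lemma~\ref{lem:LC2Bolza}, and the Bolza problem~\eqref{example:LCMCeq3} associated with them is exactly the reformulation of~\eqref{example:LQ}. Hence, under the hypotheses in the statement (which are precisely those required by Lemma~\ref{lem:Char2Bolza} together with the qualification conditions \ref{h1}--\ref{h3} for~\eqref{example:LCMCeq3}), Theorem~\ref{thm:charact_LC} applies: for $\eta\in\partial\mathbf{V}_\tau(\xi)$ there exist an optimal primal trajectory $x\in\mathscr{N}_\tau$, an associated optimal control $u\in\mathscr{M}_\tau$, and an optimal dual trajectory $p\in\mathscr{P}_\tau$ with $(\esp{x_\tau},\esp{p_\tau})=(\xi,-\eta)$, satisfying the transversality condition $-p_T\in\partial g^{LQ}(\esp{x_T})$ and the discrete-time Hamiltonian inclusion
\[
(-\espc{t}{\Delta p_t},\,\Delta x_t)\in\partial H_t^{LQ}(\omega,x_{t-1},\espc{t}{p_t}),\qquad t\in[\![\tau+1:T]\!],
\]
a.s.\ on $\omega$. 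What remains is purely a matter of computing these subdifferentials.

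By the definition of the concave--convex subdifferential~\eqref{eq:subdif_saddle}, writing $\bar x=x_{t-1}$ and $\bar p=\espc{t}{p_t}$, the inclusion above is equivalent to the two separate conditions
\[
\espc{t}{\Delta p_t}\in\partial_x\bigl(-H_t^{LQ}(\omega,\cdot\,,\bar p)\bigr)(\bar x),\qquad
\Delta x_t\in\partial_p\bigl(H_t^{LQ}(\omega,\bar x,\cdot\,)\bigr)(\bar p),
\]
where both subdifferentials are now ordinary convex subdifferentials of the convex functions $x\mapsto -H_t^{LQ}(\omega,x,\bar p)$ and $p\mapsto H_t^{LQ}(\omega,\bar x,p)$. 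For $t\in[\![\tau+2:T]\!]$, Proposition~\ref{prop:LQHamiltonian} shows that $x\mapsto -H_t^{LQ}(\omega,x,\bar p)=x^\top P x-\bar p^\top(A-I_n)x+(\text{const in }x)$ is a finite quadratic, hence smooth, with gradient $2Px-(A^\top-I_n)\bar p$; this forces $\espc{t}{\Delta p_t}=2P x_{t-1}-(A^\top-I_n)\espc{t}{p_t}$. Likewise $p\mapsto H_t^{LQ}(\omega,\bar x,p)=\tfrac14 p^\top BR^{-1}B^\top p+p^\top\!\bigl((A-I_n)\bar x+w_{t-1}\bigr)+(\text{const in }p)$ is smooth with gradient $\tfrac12 BR^{-1}B^\top p+(A-I_n)\bar x+w_{t-1}$, which gives the stated formula for $\Delta x_t$. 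These are exactly the two equalities of the proposition for $t\ge\tau+2$, and the $\Delta x_t$ computation is valid verbatim for $t=\tau+1$ as well (the term $\delta_{X_\tau}$ does not involve $p$).

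For the endpoint index $t=\tau+1$ the $x$-component picks up the extra summand $\delta_{X_\tau}(x)$ in $H_{\tau+1}^{LQ}$, so $x\mapsto -H_{\tau+1}^{LQ}(\omega,x,\bar p)$ equals the previous finite quadratic plus $\delta_{X_\tau}$. Since $X_\tau$ is nonempty closed convex and the quadratic is finite everywhere, the convex sum rule applies, and (using that the optimal $x$ satisfies $x_\tau\in X_\tau$ a.s., which is forced by finiteness of $L_{\tau+1}^{LQ}(\omega,x_\tau,\Delta x_{\tau+1})$) one gets $\partial\bigl(-H_{\tau+1}^{LQ}(\omega,\cdot\,,\bar p)\bigr)(x_\tau)=\{2Px_\tau-(A^\top-I_n)\espc{\tau+1}{p_{\tau+1}}\}+\mathcal{N}_{X_\tau}(x_\tau)$, which is the first displayed relation. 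Finally, $g^{LQ}(x)=x^\top Q x$ is smooth with $\partial g^{LQ}(\xi)=\{2Q\xi\}$, so the transversality inclusion becomes $p_T=-2Q\esp{x_T}$ a.s.

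The two gradient computations are routine and in fact mirror the proof of Proposition~\ref{prop:LQHamiltonian}; the only points needing care are the bookkeeping of signs when separating the concave ($x$) and convex ($p$) components via~\eqref{eq:subdif_saddle}, and the justification of the sum rule at $t=\tau+1$. I expect the sign bookkeeping to be the main place an error could slip in, since the Hamiltonian inclusion couples a concave direction in $x$ with a convex one in $p$ and the left-hand side carries a minus sign on the $\Delta p$ component.
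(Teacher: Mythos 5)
Your proposal is correct and follows essentially the same route as the paper: invoke Theorem~\ref{thm:charact_LC} for the existence of the primal--dual Hamiltonian trajectory, then make the inclusion explicit by differentiating the quadratic part of $H_t^{LQ}$ and accounting for the indicator $\delta_{X_\tau}$ via the normal cone at $t=\tau+1$. Your treatment is in fact slightly more careful than the paper's on the sign conventions of the concave--convex subdifferential and on justifying the sum rule at the initial stage, but the argument is the same.
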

	
	\begin{proof}
		The (LQ) problem is a special case of the LC problem, and its quadratic costs and linear dynamics satisfy the required assumptions. Therefore, Theorem~\ref{thm:charact_LC} applies, guaranteeing the existence of a primal-dual optimal pair $(\bar{x}, \bar{p})$ that forms a Hamiltonian trajectory:
		\textit{Transversality:} The terminal cost is $g\left(\esp{x_T}\right) = \left(\esp{x_T}\right)^\top Q \left(\esp{x_T}\right)$. As $g$ is convex and differentiable, its subdifferential is the singleton $\partial g\left(\esp{x_T}\right) = \{2Q\left(\esp{x_T}\right)\}$. The general transversality condition $-p_T \in \partial g\left(\esp{x_T}\right)$ thus becomes the explicit condition $p_T = -2Q\left(\esp{x_T}\right)$.
		
		\textit{Hamiltonian Dynamics:} We compute the subdifferentials of the Hamiltonian $H_t^{LQ}$ from Proposition~\ref{prop:LQHamiltonian}. Let $H_t^{LQ} = f_t(\omega, x, p) - \delta_{X_{t-1}}(x)$, where $f_t$ is the differentiable part. The gradients of $f_t$ are:
		\begin{align*}
			\nabla_{x}f_{t}(\omega,x,p)&=-2Px + (A^{\top}-I_{n})p \\
			\nabla_{p}f_{t}(\omega,x,p)&=\frac{1}{2}B R^{-1}(B^{\top}p) + (A - I_{n})x + w_{t-1}.
		\end{align*}
		The subdifferentials of the concave-convex $H_t^{LQ}$ are: 
		\begin{align*} 
			-\partial_p H_t^{LQ}(\omega, x, p) &= \nabla_x f_t(\omega, x, p) - \mathcal{N}_{X_{t-1}}(x)\\
			&= -2Px + (A^\top - I_n)p - \mathcal{N}_{X_{t-1}}(x),\\
			\partial_x H_t^{LQ}(\omega, x, p) &= \nabla_p f_t(\omega, x, p) \\
			&= \frac{1}{2}BR^{-1}(B^\top p) + (A-I_n)x + w_{t-1}
		\end{align*}
		(Note: $\mathcal{N}_{X_{t-1}}(x)$ is the convex normal cone, and $-\mathcal{N} = \mathcal{N}$). The general Hamiltonian inclusion from Theorem~\ref{thm:charact_LC} is $(-\espc{t}{\Delta p_t}, \Delta x_t) \in \partial H_t(\omega, x_{t-1}, \espc{t}{ p_t})$. Substituting the subdifferentials we just calculated into the Hamiltonian inclusion \eqref{eq:LChamiltonian} yields the dynamic equations in the proposition.
	\end{proof}

	\section*{Acknowledgments}
	This research was supported by Anid-Chile under project Fondecyt Regular 1231049.
	
	\bibliographystyle{abbrv}
	\bibliography{StochasticBolza}
	
\end{document}